\DeclarePairedDelimiter\floor{\lfloor}{\rfloor}
\newtheorem{theorem}{Theorem}
\newtheorem{proposition}[theorem]{Proposition}
\newtheorem{assumption}[theorem]{Assumption}
\newtheorem{remark}[theorem]{Remark}
\newtheorem{lemma}[theorem]{Lemma}
\newtheorem{definition}[theorem]{Definition}
\newtheorem{example}[theorem]{Example}
\def\ds{\displaystyle}
\def\s{\sigma}
\def\R{\mathbb{R}}
\def\e{\varepsilon}
\def\S{\mathcal{S}}
\def\M{\mathcal{M}}
\def\N{\mathcal{N}}
\def\C{\mathcal{C}}
\def\s{\sigma}
\def\dist{d}
\definecolor{orange}{rgb}{0.99,0.69,0.07}
\begin{document}

\title{
Upper and lower bounds for the maximal Lyapunov exponent of singularly perturbed linear switching systems}
 
  \author{
 Yacine Chitour\thanks{Laboratoire des Signaux et Syst\`emes (L2S),  Universit\'e Paris-Saclay, CNRS, CentraleSup\'elec,  Universit\'e Paris-Saclay, Gif-sur-Yvette, France, {\tt yacine.chitour@l2s.centralesupelec.fr}}, 
Ihab Haidar\thanks{Quartz EA 7393, ENSEA, 95000, Cergy, France, {\tt ihab.haidar@ensea.fr}},
 Paolo Mason\thanks{CNRS \& Laboratoire des Signaux et Syst\`emes (L2S),  Universit\'e Paris-Saclay, CNRS, CentraleSup\'elec, Gif-sur-Yvette, France, {\tt paolo.mason@l2s.centralesupelec.fr}}, and Mario Sigalotti\thanks{
 Sorbonne Université, Inria, CNRS, Laboratoire Jacques-Louis Lions (LJLL), Paris, France, {\tt mario.sigalotti@inria.fr}}}

\maketitle
\begin{abstract}
In this paper we consider the problem of determining the stability properties, and in particular assessing the exponential stability, of a singularly perturbed linear switching system. One of the challenges of this problem arises from the intricate interplay between the small parameter of singular perturbation and the rate of switching, as both tend to zero. 
Our approach consists in characterizing 
suitable auxiliary linear systems
that provide lower and upper bounds for the asymptotics of the maximal Lyapunov exponent of the  linear switching system
as the  parameter of the singular perturbation tends to zero. 
\end{abstract}

\emph{Keywords:}
Switching systems, Singular perturbation, Exponential stability, Maximal Lyapunov exponent, Differential inclusions.

\section{Introduction}
 
We consider in this paper a two-time-scales linear switching  system, 
that is, a linear switching  system for which some variables evolve on a much faster rate than the others.
This class of systems appears in several industrial and engineering applications (see, e.g., \cite{4342105,Malloci2009,9199288}) where simplified models can be formulated by neglecting the effects of fast variables on the overall system. From control point of view, this allows to design a controller based on a reduced order model. However, the design based on a simplified model may not guarantee the stability of the overall system. To avoid this problem, a well-established framework developed in the mathematical and control community is that of singular perturbations \cite{KKO}. The singular perturbation theory allows the separation between slow and fast variables where different controllers for different time-scale variables can be designed in order to lead the overall system to its desired performance. 

In mathematical terms, we study the behavior of
\begin{align*}
\dot x(t)&=A(t)x(t)+B(t)y(t),\\
\e\dot y(t)&=C(t)x(t)+D(t)y(t),
\end{align*}
where $\e$ denotes a small positive parameter and $A,B,C,D$ are matrix-valued 
{signals}
undergoing arbitrary switching within a prescribed bounded range. 
We deal, therefore, with a {1-parameter} 
family 
of linear switching systems $\Sigma:\e\mapsto \Sigma_\e$.  
One of the main issues for such families of systems consists in understanding the time-asymptotic behavior of $\Sigma_\e$ as $t\to+\infty$ in the regime where $\e$ is small.
For instance, by saying that \emph{$\Sigma$ is exponentially stable} we refer to the fact that for every $\e>0$ sufficiently small, the corresponding linear switching system $\Sigma_\e$ is exponentially stable. 
More refined notions, accounting for the uniform exponential behavior with respect to $\e$, are proposed in the paper (cf.~Definition~\ref{0-GES def}). 
Few stability criteria  for singularly perturbed switching systems in the regime $\e\sim0$ have been obtained
in the literature: among them, let us mention \cite{Malloci2009CDC}, where conditions are obtained based on the existence of a common quadratic Lyapunov function, \cite{Hachemi2011} characterizing the stability in dimension two
based on the corresponding criteria in the non-singularly-perturbed case \cite{Balde2009,Boscain2005}, and \cite{Alwan2008}, where  stability 
for time-delay singularly perturbed switching systems 
is based on  dwell-time criteria. 

A major mathematical difficulty 
relies on the fact that we are interested in characterizing a doubly asymptotic regime, 
where the order in which the limits are taken is crucial. 
Indeed, a limit as $\e\to 0$ of the considered dynamics is well known to be given, through the Tikhonov decomposition, by
\begin{equation}\label{slow-intro}
\dot x(t)=\left(A(t)-B(t)D(t)^{-1}C(t)\right)x(t),
\end{equation}
with $y(t)=-D(t)^{-1}C(t)x(t)$. 
 Heuristically, such a decomposition is 
obtained by saying that $y$ tends instantaneously to $-D(t)^{-1}C(t)x(t)$ (the equilibrium of the equation for $y$ when $x\equiv x(t)$). 
However, as it has already been observed in the literature,  
 the switching system \eqref{slow-intro} may be exponentially stable even when, for every $\e>0$, $\Sigma_\e$ is unstable \cite{Malloci2009CDC}. 
Notice that, in order to justify  the Tikhonov decomposition, we are assuming here that each $D(t)$ is a Hurwitz matrix. Actually, a standing assumption in this work is that  
the fast dynamics are exponentially stable, i.e., the trajectories of 
\begin{equation}\label{eq:fast-dym}
\dot y(t)=D(t)y(t)
\end{equation}
converge to the origin 
with a uniform exponential
rate. It should be noticed that this condition is necessary for 
the exponential stability of $\Sigma_\e$ in the regime $\e\sim 0$, since if 
\eqref{eq:fast-dym} admits trajectories diverging exponentially, then there is no hope for $\Sigma_\e$ to be exponentially  stable for $\e$ small (a precise mathematical statement of this fact is proved in Proposition~\ref{prop:toto1}).
In the limit situation where \eqref{eq:fast-dym} is stable but not exponentially stable, we can still have exponential stability of $\Sigma_\e$ for $\e\sim 0$ \cite[Section V.A]{Hachemi2011}, but this is a rather degenerate 
situation that we do not consider here.

The goal of this paper is not only to give
necessary or sufficient conditions ensuring that $\Sigma$ has a certain time-asymptotic behavior in the regime $\e\sim 0$, 
but also to prescribe upper and lower bounds 
on the limit as $\e\to 0$
of the maximal Lyapunov exponent of $\Sigma_\e$. We recall that the maximal Lyapunov exponent of a linear switching system is the largest asymptotic exponential rate as the time goes to $+\infty$ among all trajectories of the system. 
Necessary or sufficient conditions for stability then follow as particular cases: indeed, a positive lower bound ensures that, for every $\e$ small enough, system $\Sigma_\e$ is unstable, while a negative upper bound guarantees that $\Sigma_\e$ is exponentially stable for all $\e$ in a right-neighborhood of zero. 

The bounds on the limit as $\e\to 0$
of the maximal Lyapunov exponent of $\Sigma_\e$ are obtained by identifying 
suitable auxiliary switching systems for the variable $x$ (with a single time scale) that are either sub- or super-approximations of the asymptotic dynamics of $\Sigma_\e$ as $\e\to 0$, in the following sense. A sub-approximation $\bar \Sigma$ of $\Sigma$ is a  system such that each of its  trajectories  can be approximated arbitrarily well, as $\e\to 0$, by the $x$-component of a trajectory of $\Sigma_\e$. Conversely, a super-approximation $\hat \Sigma$ of $\Sigma$ is 
a system such that the $x$-component of each trajectory of $\Sigma_\e$ 
can be approximated arbitrarily well, as $\e\to 0$, by a trajectory  of $\hat \Sigma$.

The first and rather natural choice for $\bar \Sigma$ is system \eqref{slow-intro} since it corresponds to the singular perturbation approach, see for instance \cite{kokotovic84}.  
The heuristic explanation is that, if switching occurs ``slowly'' with respect to the time-scale $1/\e$, then the  variable $y(t)$  converges fast enough to $-D(t)^{-1}C(t)x(t)$, and the transient phase is too short to affect the dynamics of $x(t)$. 
As for a choice of super-approximation $\hat \Sigma$, the idea is to consider the switching parameter as evolving on a time scale possibly much faster than $1/\e$. Hence the dynamics of $y$ can be seen, on a short time interval, as a switching system with affine vector fields $y\mapsto C(t)\bar x+D(t)y$, where $\bar x$ is fixed. This kind of switching systems have been studied, for instance, in \cite{DellaRossa2022,Nilsson2013}. The exponential convergence of the solutions of \eqref{eq:fast-dym} 
implies that the trajectories of such an  affine switching system converge exponentially towards a compact set $K(\bar x)$ \cite{DellaRossa2022}. A natural choice for $\hat \Sigma$ is then the 
{differential inclusion} 
\begin{equation*}
\dot x(t)\in A(t)x(t)+B(t)K(x(t)).
\end{equation*}
We show that, indeed, this system allows to identify an upper bound for the limit of the maximal Lyapunov exponent of $\Sigma_\e$ as $\e\to 0$. 

As mentioned in the previous paragraph, the two choices of $\bar \Sigma$ and $\hat \Sigma$ just described correspond to  signals switching at a rate much slower or much faster than $1/\e$. 
We also consider a third regime, namely, the situation in which the switching occurs at rate exactly $1/\e$. This corresponds to considering a  signal $t\mapsto (A(t),B(t),C(t),D(t))=:\sigma(t)$ defined on an interval $[0,T]$ and all its reparameterizations $\sigma_\e:t\mapsto \sigma(t/\e)$ defined on $[0,\e T]$. 
The flow of $\Sigma_\e$ corresponding to $\sigma_\e$, evaluated at time $\e T$,
has an effect of order $O(\e)$ on the coordinate $x$ (since the velocities are of order 1 and the length of the time interval is of order $\e$) and of order $O(1)$ on the coordinate $y$. 
Suitable computations show that the evolution of $x$ can actually be described as
\[x\mapsto x+\e T\Lambda(T,\sigma)x+O(\e^2),\]
where the term $\Lambda(T,\sigma)$ does not depend on the initial condition of the variable $y$. 
We then propose another choice of sub-approximation of $\Sigma$, denoted $\check \Sigma$, which coincides with the switching system having as possible modes all the matrices of the type $\Lambda(T,\sigma)$. 
We provide an explicit expression for such matrices and we prove that, indeed, each 
trajectory of $\check \Sigma$ can be approximated arbitrarily well, as $\e\to 0$, by the $x$-component of a trajectory of $\Sigma_\e$.
Moreover, we show that the modes of 
$\bar \Sigma$ are also modes of $\check\Sigma$, implying that the 
maximal Lyapunov exponents of $\bar \Sigma$ is upper bounded by that of $\check\Sigma$. 
The main result of our paper, therefore, 
consists
in 
providing an interval containing all limits points of the maximal Lyapunov exponents of $\Sigma_\e$ as $\e$ goes to zero: the upper and lower bounds are given by the maximal Lyapunov exponents of $\hat \Sigma$ and $\check\Sigma$, respectively (see Theorem~\ref{thm:main}). 

The paper is organized as follows:
In Section~\ref{sec:statement} we present precise definitions of the systems $\bar \Sigma$, $\check \Sigma$, $\hat \Sigma$, and of their maximal Lyapunov exponents. We also provide the  statement of the main result, Theorem~\ref{thm:main}, whose proof is given in the reminder of the paper. In particular, in Section~\ref{sec:bar} we compare the 
maximal Lyapunov exponents of $\bar \Sigma$ and the 
asymptotic {values}
as $\e$ goes to zero of the maximal Lyapunov exponent of $\Sigma_\e$ (Proposition~\ref{epsbehavior}). The comparison with the maximal Lyapunov exponent of $\check\Sigma$ is the subject of 
Section~\ref{czech} (Proposition~\ref{check}), while Section~\ref{sec:hat} discusses the comparison with the maximal Lyapunov exponent of $\hat\Sigma$.

\subsection{Notations}
By $\R$ we denote the set of real numbers, by $|\cdot|$ the Euclidean norm of a real vector, and by $\|\cdot\|$ the induced matrix norm.
We use $\R_+$ to denote the set of non-negative real numbers and we write $\floor{x}$ to denote the 
evaluation of the 
floor function at a real number $x$. 
By $M_n(\R)$ we denote the set of $n\times n$ real matrices. The $n\times n$ identity matrix is denoted by $I_{n}$. We use $\rho(M)$ to denote the spectral radius of a matrix $M\in M_n(\R)$, defined as the largest modulus among the eigenvalues of $M$.  
By $B_r(x)$ 
  we denote the closed ball of radius $r>0$ and center $x\in\R^n$. The Hausdorff distance between two nonempty subsets $X$ and $Y$ of $\R^n$ is the quantity defined by 
\[d_H(X,Y)=\max\left\{\sup_{x\in X}d(x,Y), \sup_{y\in Y}d(y,X)\right\},\] 
where $d(x,Y)=\ds\inf_{y\in Y}|x-y|$ and $d(y,X)=\ds\inf_{x\in X}|x-y|$. Given a subset $\N$ of $M_n(\R)$, 
we denote
by $\S_{\N}$  the set of all measurable functions from $\R_+$ to 
 $\N$. 

 \section{Problem statement and main results}\label{sec:statement}

\subsection{Stability notions for singularly perturbed  linear switching  systems}

Fix $n,m\in \mathbb{N}$ and a compact set of matrices $\mathcal{M}\subset M_{n+m}(\R)$.
Let 
$\Sigma=(\Sigma_\varepsilon)_{\varepsilon>0}$ be the family of  linear switching systems
\begin{equation*}
\Sigma_\varepsilon: \quad  \begin{array}{rll}
\quad \dot x(t)&=&A(t)x(t)+B(t)y(t),\\
\varepsilon \dot y(t)&=&C(t)x(t)+D(t)y(t),
\end{array}
\end{equation*}   
where $\e$ denotes a small positive parameter and 
\begin{equation*}
t\mapsto \begin{pmatrix}
A(t)&B(t)\\
C(t) &  D(t)
\end{pmatrix}
\end{equation*}
is an arbitrary element of 
the set 
$\S_{\M}$ of measurable functions from $\R_+$ to $\mathcal{M}$.

For a given $\e>0$, 
the usual stability notions, recalled in the following definition, apply to the 
linear switching system
$\Sigma_\e$. \\

\begin{definition} \label{def:LSS-lyapunov} 
Let $d\in \mathbb{N}$ and $\N$ be a bounded subset of $M_d(\R)$. Consider the linear switching system 
\begin{equation}\label{general}
    \Sigma_{\N}:\quad \dot x(t)=N(t)x(t),\qquad N\in \S_{\N},
\end{equation} 
and denote by $\Phi_{N}(t,0)$ the flow from time $0$ to time $t$ of $\Sigma_{\N}$ associated with the switching signal $N$. 
Then $\Sigma_{\N}$
is said to be
\begin{enumerate}
\item
\emph{exponentially  stable} (ES, for short) if there exist $C>0$ and $\delta>0$ such that 
\[\|\Phi_{N}(t,0)\|\le C e^{-\delta t},\qquad \forall\,t\ge 0, \forall\, N\in \S_{\N};\]
\item
\emph{exponentially  unstable} (EU, for short) if there exist $C>0$, $\delta>0$, and a nonzero trajectory $t\mapsto x(t)$ of $\Sigma_{\N}$ such that
\[|x(t)|\ge C e^{\delta t}|x(0)|,\qquad \forall t\ge 0.\]
\end{enumerate}
The \emph{maximal Lyapunov exponent 
of $\Sigma_{\N}$} is defined as
\begin{equation*}
    \lambda(\Sigma_{\N})=\limsup_{t\to+\infty}\frac{1}{t}\sup_{N\in\S_{\N}}\log\|\Phi_{N}(t,0)\|.
\end{equation*} 
\end{definition}

\begin{remark} 
Notice that, for every $t>0$ and every $N\in \S_{\N}$, by Gelfand's formula,
\[
\rho(\Phi_{N}(t,0))=\lim_{k\to+\infty}\|\Phi_{N}(t,0)^k\|^{\frac1k}=\lim_{k\to+\infty}\|\Phi_{\tilde N}(kt,0)\|^{\frac1k}
\]
where $\tilde N$ denotes the $t$-periodic signal obtained by periodization of $N|_{[0,t]}$.
As a consequence, 
\begin{equation}\label{eq:gelfand}
\rho(\Phi_{N}(t,0))\le e^{t \lambda(\Sigma_\N)}.
\end{equation}
Actually, it  is well known (see, e.g., \cite{WIRTH200217}) that an equivalent definition of the maximal Lyapunov exponent is
\begin{equation}\label{eq:JSR}
\lambda(\Sigma_{\N})=\limsup_{t\to+\infty}\frac{1}{t}\sup_{N\in\S_{\N}}\log\rho(\Phi_{N}(t,0)).
\end{equation}

\end{remark}

\begin{remark} \label{rem:pseudo-fenichel}
It follows from the definitions that $\Sigma_{\N}$  is ES if and only if $\lambda(\Sigma_{\N})<0$. Also, by~\eqref{eq:JSR}, we have that $\Sigma_{\N}$ is EU  if and only if $\lambda(\Sigma_{\N})>0$.
Moreover, the properties of exponential stability/instability and the value of $\lambda(\Sigma_{\N})$ keep unchanged if we replace $\S_{\N}$ by the class of piecewise-constant functions from $\R_+$ to $\N$. 
\end{remark}


In the following we 
write 
$\Phi^\e_{M}(t,0)$ to denote the flow from time $0$ to time $t$ of $\Sigma_\e$ associated with a signal $M\in \S_{\M}$ and 
we 
introduce the following stability notions for the 1-parameter family $\Sigma$ of switching systems. \\

\begin{definition}\label{0-GES def}
We say that the 
1-parameter
family 
of linear switching systems $\Sigma:\e\mapsto \Sigma_\e$ is
\begin{enumerate}
\item  $\e$-\emph{uniformly exponentially stable} ($\e$-ES, for short) if there exist  
$\e^{\star}>0$, $C>0$, and $\delta>0$ such that
\begin{equation}\label{eq-ES-unif}
\|\Phi^\e_{M}(t,0)\|\leq C e^{-\delta t},\,\forall t\geq 0, \,\forall M\in \S_{\M}, \forall \e\in (0,\e^{\star});
\end{equation}
\item $\e$-\emph{uniformly exponentially unstable} ($\e$-EU, for short) if there exist $\delta>0$, $C>0$, and $\e^{\star}>0$ such that for every $\e\in (0,\e^{\star})$ there exist $M\in\S_{\M}$ and $z_0\neq 0$  for which
\begin{equation*}
|\Phi_{M}^\e(t,0)z_0|\geq C e^{\delta t}|z_0|,\qquad \forall\,t\geq 0.
\end{equation*}
\end{enumerate}
\end{definition}

\medskip

\begin{remark}
The $\e$-uniform exponential stability of $\Sigma$ is stronger than the property of global uniform asymptotic stability  introduced in \cite{Hachemi2011}, since it not only guarantees that $\lambda(\Sigma_\e)<0$ for every $\e$ small enough, but also that the stability is uniform with respect to $\e$, i.e.,   $\lambda(\Sigma_\e)$ is smaller than the negative number $-\delta$, independent of $\e$ small, and the constant $C$ appearing in \eqref{eq-ES-unif} is independent of $\e$ small. 
Similar considerations concern {the} $\e$-uniform exponential instability.
\end{remark}

\subsection{Auxiliary switching systems and main result}

Our stability analysis of $\Sigma$ relies on the comparison with  
several auxiliary systems having a single time scale.  

Let us first discuss the system $\Sigma_D$ corresponding to the evolution of the fast variable $y$ with $x=0$. More precisely,
let 
\[\M_D=\{D\mid  (\begin{smallmatrix}A&B\\C&D\end{smallmatrix})
\in \M\}\]
and consider the linear switching system
\[
    \Sigma_{D}:\quad \dot y(t)=D(t)y(t),\qquad D\in \S_{\M_D}.\]
We first provide a useful result for the asymptotic behavior of 
$\Sigma$.\\

\begin{proposition}\label{prop:toto1}
With the notations above, it holds that
\[\lim_{\varepsilon\to 0}\varepsilon\lambda(\Sigma_\varepsilon)=\max\{
0,\lambda(\Sigma_D)
\}
.\]
\end{proposition}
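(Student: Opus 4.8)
The starting point is a time rescaling. Regarding $\Sigma_\varepsilon$ as the single-time-scale switching system $\dot z=F(t)z$ with $F(t)=\left(\begin{smallmatrix}A(t)&B(t)\\ C(t)/\varepsilon&D(t)/\varepsilon\end{smallmatrix}\right)$, and using the elementary identity $\lambda(\Sigma_{\alpha\mathcal N})=\alpha\,\lambda(\Sigma_{\mathcal N})$ for $\alpha>0$ (change of time variable $s=t/\alpha$ in Definition~\ref{def:LSS-lyapunov}) with $\alpha=\varepsilon$, one obtains
\[
\varepsilon\,\lambda(\Sigma_\varepsilon)=\lambda(\Sigma_{\M_\varepsilon}),\qquad \M_\varepsilon:=\left\{\begin{pmatrix}\varepsilon A&\varepsilon B\\ C&D\end{pmatrix}\ :\ \begin{pmatrix}A&B\\ C&D\end{pmatrix}\in\M\right\},
\]
so the statement reduces to $\lim_{\varepsilon\to 0}\lambda(\Sigma_{\M_\varepsilon})=\max\{0,\lambda(\Sigma_D)\}$. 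As $\varepsilon\to 0$, $\M_\varepsilon$ tends in Hausdorff distance to $\M_0:=\{\left(\begin{smallmatrix}0&0\\ C&D\end{smallmatrix}\right)\}$; since for $\M_0$ the $x$-block is frozen (hence contributes exponent $0$) while $\{0\}\times\R^m$ is invariant and carries exactly the dynamics $\Sigma_D$, one checks directly that $\lambda(\Sigma_{\M_0})=\max\{0,\lambda(\Sigma_D)\}$. The statement would then follow from continuity of the maximal Lyapunov exponent with respect to the Hausdorff distance; as this is not among the facts recalled above, I would instead prove the two inequalities $\limsup_{\varepsilon\to 0}\lambda(\Sigma_{\M_\varepsilon})\le\max\{0,\lambda(\Sigma_D)\}$ and $\liminf_{\varepsilon\to 0}\lambda(\Sigma_{\M_\varepsilon})\ge\max\{0,\lambda(\Sigma_D)\}$ by hand.

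For the upper bound, fix $\mu>\max\{0,\lambda(\Sigma_D)\}$. Since $\mu>\lambda(\Sigma_D)$ there is $K_\mu>0$ with $\|\Phi_D(s,\tau)\|\le K_\mu e^{\mu(s-\tau)}$ for all $s\ge\tau\ge 0$ and all $D\in\S_{\M_D}$, where $\Phi_D$ denotes the flow of $\Sigma_D$. Take any signal of $\S_{\M_\varepsilon}$ and the corresponding trajectory $(x,y)$ of the rescaled system $\dot x=\varepsilon(Ax+By)$, $\dot y=Cx+Dy$. Variation of constants on the $y$-equation gives $|y(s)|\le K_\mu e^{\mu s}|y(0)|+K_\mu\gamma\int_0^s e^{\mu(s-\tau)}|x(\tau)|\,d\tau$, while the $x$-equation gives $|x(s)|\le|x(0)|+\varepsilon\int_0^s(\alpha|x(\tau)|+\beta|y(\tau)|)\,d\tau$, where $\alpha,\beta,\gamma$ bound $\|A\|,\|B\|,\|C\|$ uniformly over $\M$. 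Inserting the first estimate into the second and running a Gronwall-type argument — using $\mu>0$ to absorb the integrals $\int_0^s e^{\mu(s-\tau)}\,d\tau$ — yields $\|(x(s),y(s))\|\le K(1+s)^{p}e^{(\mu+o(1))s}\,\|(x(0),y(0))\|$ with $K,p$ independent of the signal and $o(1)\to 0$ as $\varepsilon\to 0$. Hence $\lambda(\Sigma_{\M_\varepsilon})\le\mu+o(1)$, so $\limsup_{\varepsilon\to 0}\lambda(\Sigma_{\M_\varepsilon})\le\mu$, and letting $\mu$ decrease to $\max\{0,\lambda(\Sigma_D)\}$ gives the upper bound.

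For the lower bound, fix $\delta>0$. By the characterization \eqref{eq:JSR} of $\lambda(\Sigma_D)$ there are $T>0$ and a signal $\bar D\in\S_{\M_D}$, which may be taken piecewise constant thanks to Remark~\ref{rem:pseudo-fenichel}, such that $\tfrac1T\log\rho(\Phi_{\bar D}(T,0))\ge\lambda(\Sigma_D)-\delta$. Complete $\bar D$ to a piecewise constant signal $\bar M_\varepsilon\in\S_{\M_\varepsilon}$ by selecting, on each interval of constancy, a matrix of $\M$ whose lower-right block equals the corresponding value of $\bar D$. As $\varepsilon\to 0$, the flow $\Phi_{\bar M_\varepsilon}(T,0)$ of $\Sigma_{\M_\varepsilon}$ converges (continuous dependence on the coefficient matrix over the compact interval $[0,T]$) to the time-$T$ flow of $\dot x=0$, $\dot y=\bar C x+\bar D y$, which is block lower triangular with diagonal blocks $I_n$ and $\Phi_{\bar D}(T,0)$, hence of spectral radius $\max\{1,\rho(\Phi_{\bar D}(T,0))\}$. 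Combining continuity of the spectral radius with \eqref{eq:gelfand} applied to $\Sigma_{\M_\varepsilon}$,
\[
\liminf_{\varepsilon\to 0}\lambda(\Sigma_{\M_\varepsilon})\ \ge\ \tfrac1T\log\max\{1,\rho(\Phi_{\bar D}(T,0))\}\ \ge\ \max\{0,\lambda(\Sigma_D)-\delta\},
\]
and letting $\delta\to 0$ yields $\liminf_{\varepsilon\to 0}\lambda(\Sigma_{\M_\varepsilon})\ge\max\{0,\lambda(\Sigma_D)\}$, which together with the upper bound proves the claim.

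The main obstacle is the upper bound, namely making the Gronwall estimate quantitative: one must show that the $O(\varepsilon)$ feedback of $x$ into the $y$-equation perturbs the exponential rate $\mu$ governing $y$ by an amount controlled uniformly over all switching signals and vanishing as $\varepsilon\to 0$. The remaining ingredients — the rescaling identity, the spectral computation for $\M_0$, the (here finite) selection, and the convergence of flows on a fixed compact time interval — are routine.
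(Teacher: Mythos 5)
Your proof is correct, and its first step --- the time rescaling giving $\e\lambda(\Sigma_\e)=\lambda(\Sigma_{\N_\e})$ (your $\M_\e$ is the paper's $\N_\e$) and the identification of the Hausdorff limit set $\N_0$ --- is exactly the paper's. Where you genuinely diverge is in how the convergence $\lambda(\Sigma_{\N_\e})\to\lambda(\Sigma_{\N_0})=\max\{0,\lambda(\Sigma_D)\}$ is established: the paper disposes of this in one line by invoking the continuity of the maximal Lyapunov exponent with respect to the Hausdorff topology on compact sets of matrices (Lemma~3.5 and Equation~(19) of \cite{WIRTH200217}), and does not even write out the computation of $\lambda(\Sigma_{\N_0})$, whereas you prove the two inequalities by hand. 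Your route is longer but self-contained and elementary; the paper's is a two-line citation resting on a nontrivial external result. One point of care in your upper bound, which your sketch gets right but which is worth making explicit when you write it out: the order of substitution in the Gronwall step matters. The coupling $C$ from $x$ into the $y$-equation is \emph{not} small, so a symmetric Gronwall estimate on the pair $(|x|,|y|)$ would only yield the rate $\mu+O(1)$ rather than $\mu+o(1)$. The argument works precisely in the order you describe --- insert the variation-of-constants bound for $y$ into the integral inequality for $x$, use $\mu>0$ to bound the iterated convolution by $\mu^{-1}\int_0^s e^{\mu(s-r)}|x(r)|\,dr$, and observe that the resulting closed inequality for $e^{-\mu s}|x(s)|$ has Gronwall coefficient $\e(\alpha+\beta K_\mu\gamma/\mu)=O(\e)$ because the $x$-equation carries the factor $\e$ --- after which $y$ is recovered from $x$ (at the price of a constant that may grow like $1/\e$, which is harmless for the Lyapunov exponent at fixed $\e$). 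Your lower bound is the same periodization/spectral-radius/Gelfand argument the paper uses elsewhere (e.g., in the proof of Proposition~\ref{epsbehavior}) and is fine, including the case $\lambda(\Sigma_D)\le 0$, where the identity block in the limit flow forces the spectral radius to be at least one.
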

\begin{proof}Consider the family $(\Sigma_{\N_\e})_{\e\geq 0}$ of linear switching  systems, where, for every $\e\geq 0$, the compact set of matrices $\N_\e\subset M_{n+m}(\R)$ is defined by 
\[ \N_\e = \left\{\begin{pmatrix}\e A &\e B\\
C &  D
\end{pmatrix}\,|\,  \begin{pmatrix} A &B\\
C &  D
\end{pmatrix}\in \M\right\}.
\]
Then one notices that, for every $\e>0$, the time rescaling $t\mapsto \e t$ yields 
\[\
\phi_{N_\varepsilon}(t,0)=\phi^{\varepsilon}_{M}(\varepsilon t,0),
\]
where $M$ is an arbitrary signal in $\M$ and the signal $N_\e\in\N_\e$ is defined as 
\[N_\varepsilon(t)=\begin{pmatrix}
\varepsilon A(t)&\varepsilon B(t)\\
C(t) &  D(t)
\end{pmatrix},
\quad 
t\geq 0.
\]This implies at once that, for every $\e$, 
$\varepsilon\lambda(\Sigma_\varepsilon)=\lambda(\Sigma_{\N_\e})$.
Next notice that the 
compact sets $
\N_\e
$ converge 
to $\N_0$ 
as $\e$ goes to zero
for the Hausdorff topology defined on the family of compact subsets of $M_{n+m}(\R)$.
One gets the conclusion
using
Lemma 3.5 and Equation~(19)
from \cite
{WIRTH200217}.
\end{proof}
The above result yields, in the particular case where $\lambda(\Sigma_D)>0$, that $\lambda(\Sigma_\e)\ge
\mu/\e$
for $\mu\in (0,\lambda(\Sigma_D))$ and $\e>0$ small enough, hence 
$\Sigma_\e$ is unstable with trajectories diverging at an arbitrarily large exponential rate  for $\e$ small.
This motivates the following working assumption for the rest of the paper.\\

\begin{assumption}\label{UGES-fast}
The switching system $\Sigma_D$ is ES, that is, $\lambda(\Sigma_D)<0$.
\end{assumption}
 In particular, all matrices in $\M_D$ are Hurwitz (and then invertible). 
%
We also  introduce the  set  
\begin{equation*}
\bar{\mathcal{M}}:=\{A-BD^{-1}C\mid 
(\begin{smallmatrix}A&B\\C&D\end{smallmatrix})
\in\mathcal{M}\}\subset M_{n}(\R),
\end{equation*}
which collects the modes of the switching system
\begin{align*}
\bar\Sigma:  \dot {\bar x}(t)&=\left(A(t)-B(t)D(t)^{-1}C(t)\right)\bar x(t),\qquad (\begin{smallmatrix}A&B\\C&D\end{smallmatrix})\in \S_{\M}.
\end{align*}

As described in the introduction, we also consider as auxiliary system another switching system, denoted by $\check{\Sigma}$,
with a larger set of modes than $\bar \Sigma$, and which
could be {thought} of as the 
slow dynamics corresponding to 
signals whose
switching occurs at rate exactly $1/\e$.
In order to define $\check{\Sigma}$,  let us associate  
with every $T>0$ and every $\sigma=(\begin{smallmatrix}A&B\\C&D\end{smallmatrix})\in \S_{\M}$  the matrices
\begin{align*}
\Lambda_0(T,\sigma)&=\int_{0}^{T}\Phi_{D}(T,s)C(s)\ ds,\\ 
 \Lambda_1(T,\sigma)&=\int_{0}^{T}\left(A(s) +B(s)\Lambda_0(s,\s)\right)\ ds, \\
\Lambda_2(T,\sigma)&=\int_{0}^{T}B(s)\Phi_{D}(s,0)\ ds,
 \end{align*}
and
\[\Lambda(T,\s)=\frac{\Lambda_1(T,\s)+\Lambda_2(T,\s)\left(I_m-\Phi_D(T,0)\right)^{-1}\Lambda_0(T,\s)}T.\]
As proved in Lemma~\ref{paolo-lemma} given in Section~\ref{czech}, the set
\[\mathcal{\check M}:=\{\Lambda(T,\s)\mid T>0,\;\s\in\S_\M\}\]
is bounded in $M_n(\R)$. 
The 
switching system $\check{\Sigma}$
is then defined as
\begin{align}\label{new limit system}
\check\Sigma: \quad \dot {\check x}(t)=M(t)
\check x(t), \quad M\in \S_{\check \M}.
\end{align}

In the case of autonomous systems, i.e., in the absence of switching,
singular perturbation theory~\cite{KOKOTOVIC1976123}
guarantees that
the exponential stability of $\Sigma$
is completely characterized by the associated reduced dynamics~$\bar\Sigma$. 
In the switching case, however, 
it is well known that the stability of $\bar\Sigma$ is not sufficient to deduce the stability of the perturbed switching system $\Sigma_\e$ for $\e\sim 0$~\cite{Malloci2009CDC}.
The main difference
is that, fixed $x\in \R^n$, in the switching case  the $\omega$-limit set $K(x)$ 
of the \emph{fast dynamics}
 \begin{align}\label{fast}
\Sigma_x: \quad \dot y(t)&= D(t)y(t)+C(t) x,\qquad (\begin{smallmatrix}A&B\\C&D\end{smallmatrix})\in \S_{\M}, 
\end{align}
is in general not reduced to 
a single equilibrium point \cite{DellaRossa2022,Nilsson2013}.
The precise definition of the set $K(x)$ is the closure of the union of the $\omega$-limit sets of all trajectories of $\Sigma_x$ starting from the initial condition $y(0)=0$. 
We will prove in Proposition~\ref{forward invariance} and Lemma~\ref{lem:KLiphom}
that $K(x)$ is compact and the set-valued map $x\mapsto K(x)$  is  homogeneous of degree one and globally Lipschitz continuous for the Hausdorff distance.

Let us use the set-valued map $K$ to introduce a last  auxiliary system, which is not necessarily a switching system.
Namely, let us set
\begin{equation*}
\hat{\M}(x):=\{Ax+B y\mid 
(\begin{smallmatrix}A&B\\C&D\end{smallmatrix})\in\mathcal{M},\;y\in K(x)\},\quad x\in \R^n,
\end{equation*}
and consider the differential inclusion
 \begin{align*}
\hat\Sigma: \quad \dot x(t)&\in \hat{\M}(x(t)). \end{align*}
Notice that $\hat{\M}(x)$ is compact for every $x\in\R^n$, since both $\M$ and $K(x)$ are compact. 
We recall that a solution to $\hat\Sigma$  (in the sense of Filippov) is an absolutely continuous map $\R_+\ni t\mapsto x(t)\in \R^n$ such that $\dot x(t)\in \hat{M}(x(t))$ for almost all $t$. The existence of solutions to $\hat{\Sigma}$ for every initial condition is a consequence of 
classical results on differential inclusions
(see, e.g., \cite[Chapter 2]{aubin-cellina}). 

In analogy {with}
Definition~\ref{def:LSS-lyapunov}, we say that 
$\hat\Sigma$ is ES (for \emph{exponentially stable}) if there exist $C>0$ and $\delta>0$ such that every trajectory $x(\cdot)$ of $\hat\Sigma$ satisfies
\[|x(t)|\le C e^{-\delta t}|x(0)|,\qquad \forall t\ge 0.\]
The notion of exponential instability is completely analogous to 
the one given for linear switching systems. As for the Lyapunov exponent of $\hat\Sigma$, it can be defined as 
\begin{equation*}
   \lambda(\hat\Sigma):=\limsup_{t\to+\infty}\frac{1}{t}\sup\log|x(t)|,
\end{equation*} 
where the sup is taken over all trajectories $x(\cdot)$  of~$\hat\Sigma$  with $|x(0)|=1$.
Notice that the restriction to initial conditions with unit norm is justified by the fact that $x\mapsto K(x)$, and hence $x\mapsto {\hat {\M}(x)}$, is homogeneous of degree one.

The Lyapunov exponent $\lambda(\hat\Sigma)$ satisfies the following property, which generalizes the corresponding one for switching systems recalled in
Remark~\ref{rem:pseudo-fenichel}. \\

\begin{lemma}
\label{lem:ES-DI}
System $\hat\Sigma$ is ES if and only if $\lambda(\hat\Sigma)<0$.
\end{lemma}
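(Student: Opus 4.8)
The plan is as follows. The implication ``$\hat\Sigma$ ES $\Rightarrow\lambda(\hat\Sigma)<0$'' is immediate: if $|x(t)|\le Ce^{-\delta t}|x(0)|$ for every trajectory, then choosing $|x(0)|=1$ gives $\frac1t\log|x(t)|\le\frac{\log C}{t}-\delta$, so that $\lambda(\hat\Sigma)\le-\delta<0$. The substance is in the converse, and I would organize it around the ``radial'' function on the unit sphere.

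First I would record two preliminary facts. \emph{Linear growth:} since $\M$ is compact and, by Lemma~\ref{lem:KLiphom}, $x\mapsto K(x)$ is homogeneous of degree one and globally Lipschitz for $d_H$ with $K(0)=\{0\}$ (the latter because the only trajectory of $\Sigma_0$ from $y(0)=0$ is $y\equiv0$), there is $L>0$ with $\sup_{v\in\hat{\M}(x)}|v|\le L|x|$ for all $x$; by Grönwall's inequality every solution then satisfies $e^{-Lt}|x(0)|\le|x(t)|\le e^{Lt}|x(0)|$ for $t\ge0$, and in particular the only solution through the origin is $x\equiv0$. \emph{Structural closure:} because $\hat{\M}(\alpha x)=\alpha\hat{\M}(x)$ for $\alpha>0$, the class of (Filippov) solutions of $\hat\Sigma$ is closed under positive scaling; it is also closed under time translation and under concatenation at a matching point.

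Next, define $r(t):=\sup\{|x(t)|\mid x(\cdot)\text{ solution of }\hat\Sigma,\ |x(0)|=1\}$. By the growth bounds, $e^{-Lt}\le r(t)\le e^{Lt}$, so $r$ is positive and locally bounded, with $r(0)=1$. By homogeneity, $|x(t)|\le r(t)|x(0)|$ for \emph{every} solution, and since $\log$ is a continuous increasing bijection of $(0,\infty)$ onto $\R$ one has $\lambda(\hat\Sigma)=\limsup_{t\to\infty}\frac1t\log r(t)$. The key point is \emph{submultiplicativity} $r(t+s)\le r(t)r(s)$: given a solution $x$ on $[0,t+s]$ with $|x(0)|=1$, its restriction to $[0,t]$ yields $|x(t)|\le r(t)$, while $\tau\mapsto x(t+\tau)/|x(t)|$ is, by the time-translation and scaling properties, a solution with unit initial norm, so $|x(t+s)|\le r(s)|x(t)|\le r(s)r(t)$; taking the supremum over $x$ gives the claim. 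Thus $\log r$ is subadditive and locally bounded above (one may also invoke Fekete's lemma to see that the $\limsup$ above is in fact $\inf_{t>0}\frac1t\log r(t)$). Consequently, if $\lambda(\hat\Sigma)<0$ there exists $T>0$ with $r(T)<1$; set $\delta:=-\tfrac1T\log r(T)>0$. For arbitrary $t\ge0$ write $t=kT+s$ with $k=\floor{t/T}$ and $s\in[0,T)$; submultiplicativity together with $r(s)\le e^{LT}$ and $kT>t-T$ gives $r(t)\le r(T)^k e^{LT}\le e^{LT}e^{-\delta(t-T)}=:Ce^{-\delta t}$. Finally, using $|x(t)|\le r(t)|x(0)|$ for every solution (and $x\equiv0$ through the origin), we obtain $|x(t)|\le Ce^{-\delta t}|x(0)|$ for all solutions and all $t\ge0$, i.e.\ $\hat\Sigma$ is ES.

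The step I expect to require the most care is the submultiplicativity of $r$: one must check that concatenation and positive rescaling keep us within the admissible class of Filippov solutions of the differential inclusion $\hat\Sigma$, and that the supremum defining $r$ can be split across the two time intervals. Establishing $K(0)=\{0\}$ and the attendant linear growth bound $\|\hat{\M}(x)\|\le L|x|$ — which is what makes both the finiteness of $r$ and the triviality of the zero solution work — is the other point to pin down; everything else reduces to Grönwall's inequality and the subadditivity argument.
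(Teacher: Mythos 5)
Your proof is correct, but it reaches the converse implication by a different mechanism than the paper. The paper's argument is more direct: from the very definition of $\lambda(\hat\Sigma)$ as a $\limsup$ of a supremum over unit-norm initial data, it extracts a $T>0$ such that $|\hat x(t)|\le e^{-\delta t}|\hat x(0)|$ holds for \emph{all} $t\ge T$ (homogeneity being used implicitly to pass from unit-norm to arbitrary initial conditions), and then the only remaining work is to cover the initial segment $[0,T]$ via the linear growth bound $|\dot{\hat x}(t)|\le \hat C|\hat x(t)|$ and Gr\"onwall, which produces the multiplicative constant. You instead use the negativity of $\lambda(\hat\Sigma)$ only to find a single time $T$ with $r(T)<1$ and then propagate the decay by submultiplicativity of $r(t)=\sup_{|x(0)|=1}|x(t)|$, i.e.\ a Fekete-type iteration. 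Both routes rest on the same two structural facts --- the linear growth bound coming from $K(0)=\{0\}$, homogeneity and Lipschitz continuity of $K$, and the invariance of the solution set under positive scaling --- and your write-up has the merit of making these explicit (the paper asserts the growth bound in one line and leaves the scaling step unspoken). Two small remarks: the concatenation-closure property you single out as the delicate point is not actually needed for submultiplicativity --- only restriction, time-translation (the inclusion is autonomous), and positive rescaling enter your argument; and the detour through $r(T)<1$ is slightly weaker information than what the $\limsup$ definition already hands you for free, which is why the paper's version is shorter.
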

\begin{proof}
The direct implication is trivial.
On the other hand, by definition of $\lambda(\hat\Sigma)$, if the latter is negative then for every $\delta\in (0,|\lambda(\hat\Sigma)|)$ there exists $T>0$ such that $|\hat x(t)|\leq e^{-\delta t}|\hat x(0)|$ for every solution $\hat x(\cdot)$ of the differential inclusion and $t\geq T$. Furthermore, $|\dot{\hat x}(t)|\leq \hat C|\hat x(t)|$ for some $\hat C>0$, because of the compactness, continuity, and homogeneity of the 
map $\hat{\M}$.
Hence $|\hat x(t)|\leq e^{\hat C t}|\hat x(0)|\leq \hat K e^{-\delta t}|\hat x(0)|$ on $[0,T]$, where $\hat K = \max_{t\in [0,T]}e^{(\hat C+\delta) t}=e^{(\hat C+\delta) T}$. We conclude that $|\hat x(t)|\leq \hat Ke^{-\delta t}|\hat x(0)|$ for every $t>0$.
\end{proof}

The following theorem summarizes the main results obtained in this paper.\\

\begin{theorem}\label{thm:main}
Suppose that Assumption~\ref{UGES-fast} holds. Then  
\begin{equation}\label{eigenvalues}
\lambda(\bar\Sigma) \leq\lambda(\check\Sigma) \leq \liminf_{\e\to 0^+} \lambda(\Sigma_{\e})\leq \limsup_{\e\to 0^+}\lambda(\Sigma_{\e})\leq \lambda(\hat\Sigma).
\end{equation}
Moreover, 
\begin{enumerate}
\item if system~$\check\Sigma$ is EU then system $\Sigma$ is $\e$-EU;
\item If system~$\hat\Sigma$ is ES then system~$\Sigma$ is $\e$-ES.
\end{enumerate}
\end{theorem}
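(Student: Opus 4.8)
\emph{The plan is} to prove the four inequalities in~\eqref{eigenvalues} one at a time, invoking the comparison results prepared in Sections~\ref{sec:bar}--\ref{sec:hat}, and then to deduce items~1 and~2 by combining them with the equivalences ``ES $\Leftrightarrow\lambda<0$'' and ``EU $\Leftrightarrow\lambda>0$'' (Remark~\ref{rem:pseudo-fenichel} and Lemma~\ref{lem:ES-DI}), taking care to recover constants uniform in~$\e$. For the first inequality I would show that every mode of $\bar\Sigma$ belongs to $\mathcal{\check M}$, or at least to its closure; this suffices since $\lambda$ of a linear switching system is monotone under inclusion of the set of modes and invariant under passing to the closure. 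Concretely, taking for $\sigma$ the constant signal equal to a given $(\begin{smallmatrix}A&B\\C&D\end{smallmatrix})\in\M$ and using $\int_0^T e^{sD}\,ds=(e^{TD}-I_m)D^{-1}$ together with the fact that $D$ is Hurwitz (Assumption~\ref{UGES-fast}), a short computation yields $\Lambda_0(T,\sigma)\to -D^{-1}C$, $\Lambda_2(T,\sigma)\to -BD^{-1}$, $(I_m-\Phi_D(T,0))^{-1}\to I_m$ and $\Lambda_1(T,\sigma)/T\to A-BD^{-1}C$ as $T\to+\infty$, hence $\Lambda(T,\sigma)\to A-BD^{-1}C$; thus $\bar{\M}\subseteq\overline{\mathcal{\check M}}$, the set $\mathcal{\check M}$ being bounded by Lemma~\ref{paolo-lemma}. (Proposition~\ref{epsbehavior} supplies, in the same circle of ideas, the direct Tikhonov-type comparison of $\lambda(\bar\Sigma)$ with the $\e\to0$ behaviour of $\lambda(\Sigma_\e)$.)

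For the second inequality, $\lambda(\check\Sigma)\le\liminf_{\e\to0^+}\lambda(\Sigma_\e)$, I would appeal to Proposition~\ref{check}: given $\eta>0$, by~\eqref{eq:JSR} choose $\tau>0$ and a $\tau$-periodic piecewise-constant signal of $\check\Sigma$ whose monodromy has spectral radius at least $e^{(\lambda(\check\Sigma)-\eta)\tau}$; since each mode $\Lambda(T,\sigma)$ is realized, up to an error vanishing as $\e\to0$, on the $x$-component of $\Sigma_\e$ by running the reparametrized signal $\sigma(\cdot/\e)$ repeated $\lfloor(\text{duration})/(\e T)\rfloor$ times, the monodromy of the corresponding signal of $\Sigma_\e$ has spectral radius at least $e^{(\lambda(\check\Sigma)-2\eta)\tau}$ for $\e$ small, and then~\eqref{eq:gelfand} gives $\lambda(\Sigma_\e)\ge\lambda(\check\Sigma)-2\eta$; letting $\eta\downarrow0$ concludes. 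The third inequality $\liminf\le\limsup$ is trivial. For the fourth, $\limsup_{\e\to0^+}\lambda(\Sigma_\e)\le\lambda(\hat\Sigma)$, I would use the results of Section~\ref{sec:hat}: on any fixed time window the $x$-component of a trajectory of $\Sigma_\e$ is approximated, as $\e\to0$, by a trajectory of the differential inclusion $\hat\Sigma$ (the fast variable being slaved, thanks to Assumption~\ref{UGES-fast}, to the compact set $K(x)$), and moreover $|y_\e(t)|\lesssim\sup_{[0,t]}|x_\e|+e^{-ct/\e}|y_\e(0)|$ for some $c>0$ by variation of constants; hence the long-time growth rate of $\|\Phi^\e_M(t,0)\|$ is, in the limit $\e\to0$, dominated by the growth rate of trajectories of $\hat\Sigma$, namely $\lambda(\hat\Sigma)$.

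For item~1: if $\check\Sigma$ is EU then $\lambda(\check\Sigma)>0$, so by~\eqref{eigenvalues} $\lambda(\Sigma_\e)>\delta$ for some $\delta>0$ and all $\e$ in a right-neighbourhood of~$0$; but $\e$-EU asks for a \emph{single} trajectory of $\Sigma_\e$ growing exponentially with a rate and prefactor both independent of $\e$, which I would obtain from the construction behind Proposition~\ref{check}. Fix a $\tau$-periodic signal of $\check\Sigma$ whose monodromy has spectral radius $e^{c_0\tau}>1$ and realize it on $\Sigma_\e$ by concatenating the reparametrized blocks above, producing a $\tau$-periodic signal $M_\e$ with $\rho(\Psi_\e)\ge e^{c_0\tau/2}$, where $\Psi_\e:=\Phi^\e_{M_\e}(\tau,0)$, for $\e$ small. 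Starting from a dominant eigenvector $v_\e$ of $\Psi_\e$ (which for $\e$ small lies $O(\e)$-close to the slow subspace, so its $y$-part is comparable to its $x$-part), the identity $\Phi^\e_{M_\e}(k\tau+s,0)v_\e=\mu_\e^{\,k}\Phi^\e_{M_\e}(s,0)v_\e$ gives $|\Phi^\e_{M_\e}(t,0)v_\e|\ge C e^{(c_0/2)t}|v_\e|$ with $C>0$ independent of $\e$, because along this slow-manifold-shadowing trajectory $|x_\e(s)|$ has $\e$-independent bounded distortion on the fixed window $[0,\tau]$ (a complex dominant eigenvalue being handled on its invariant real $2$-plane). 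Hence $\Sigma$ is $\e$-EU.

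For item~2: if $\hat\Sigma$ is ES then $\lambda(\hat\Sigma)<0$, so by~\eqref{eigenvalues} $\lambda(\Sigma_\e)<-\delta$ for some $\delta>0$ and all small $\e$; to get the $\e$-uniform prefactor in~\eqref{eq-ES-unif} I would iterate the fixed-window approximation of Section~\ref{sec:hat} over successive intervals of a length $T$ chosen so that $\hat\Sigma$ contracts by a factor $\le 1/2$ on $[0,T]$, using homogeneity of $\hat{\M}$ to keep the approximation error proportional to the current state, and the bound on $|y_\e|$ above (hence Assumption~\ref{UGES-fast}) to pass the exponential decay from the $x$-component to the full flow $\Phi^\e_M(t,0)$. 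I expect the main obstacle to be precisely this last point: once Propositions~\ref{epsbehavior}, \ref{check} and the estimates of Section~\ref{sec:hat} are available, the chain~\eqref{eigenvalues} is routine assembly, whereas upgrading ``$\lambda(\Sigma_\e)>\delta$'' (resp.\ ``$\lambda(\Sigma_\e)<-\delta$'') to $\e$-EU (resp.\ $\e$-ES) with a prefactor uniform in $\e$ forces one to exploit the quantitative shadowing estimates underlying those results, controlling how the fast transient of size $e^{-ct/\e}$ interacts with the slow dynamics on time windows of fixed, $\e$-independent length.
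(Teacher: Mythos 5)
Your assembly of the theorem from the section-wise comparison results is the same as the paper's: the chain \eqref{eigenvalues} together with items 1 and 2 is exactly the union of Proposition~\ref{check} and the final (unlabelled) proposition of Section~\ref{sec:hat}, and your treatment of the second inequality and of item~1 (a periodic $\check\Sigma$-signal realized on $\Sigma_\e$ by repeating the reparametrized blocks $\sigma_k(\cdot/\e)$, comparison of spectral radii of the monodromies via \eqref{eq:gelfand}--\eqref{eq:JSR}, then a dominant eigenvector of $\Upsilon_{\e,t_\e}$ with non-degenerate $x$-component propagated periodically) reproduces the paper's argument in Proposition~\ref{check}. You deviate in two places. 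For $\lambda(\bar\Sigma)\le\lambda(\check\Sigma)$ you show $\bar\M\subseteq\overline{\check\M}$ by letting $T\to+\infty$ in $\Lambda(T,\sigma)$ for constant $\sigma$ and invoking monotonicity and closure-invariance of the maximal Lyapunov exponent; the paper instead observes that for constant $\sigma$ one has the exact identity $\Lambda(T,\sigma)=A-BD^{-1}C$ for \emph{every} $T>0$ (with $\Lambda_0=(e^{TD}-I_m)D^{-1}C$ and $\Lambda_2=BD^{-1}(e^{TD}-I_m)$ the cross term cancels the $e^{TD}$-dependent part of $\Lambda_1$ identically), so $\bar\M\subseteq\check\M$ with no limit and no closure argument needed. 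Both routes are valid. For the fourth inequality and item~2 you propose a direct windowed shadowing-and-iteration argument, whereas the paper constructs a common Lyapunov function $V=V_1+\chi V_2$, with $V_1$ a converse-Lyapunov function for $\hat\Sigma_\mu$ and $V_2(x,y)$ built from $\dist(y,K(x))$, and shows that $V$ decays exponentially along every trajectory of $\Sigma_{\e,\mu}$ for $\e$ small; this yields the $\e$-uniform constants of Definition~\ref{0-GES def} in one stroke.

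On that last point, the obstacle you flag is genuine and is precisely what your sketch leaves unproven. The Filippov approximation of $x_\e$ by a trajectory of $\hat\Sigma$ over a fixed window is controlled by $\int_0^{T}\dist(y_\e(s),K(x_\e(s)))\,ds$, not by $|y_\e|$, and since $x_\e$ moves at speed $O(1)$ the set $K(x_\e(t))$ is a moving target; what is needed is an estimate of the form $\dist(y_\e(t),K(x_\e(t)))\le c\,e^{-\delta t/\e}\dist(y_\e(0),K(x_\e(0)))+O(\e)\sup_{[0,t]}|x_\e|$, which requires the Lipschitz continuity of $K$ (Lemma~\ref{lem:KLiphom}) together with a variation-of-constants/Gronwall argument combining Proposition~\ref{forward invariance} with the drift of $x_\e$. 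With that estimate in hand your iteration over windows of length $T$ with contraction factor $\tfrac12+O(\e T)$ does close, and the transfer of decay from $x_\e$ to the full state via your bound on $|y_\e|$ is correct; so your route is viable. But as written, the fourth inequality and item~2 rest on exactly the quantitative estimate that the paper's $V_2$-computation (the chain of inequalities bounding $V_2(x(\e h),y(\e h))-V_2(x(0),y(0))$) is designed to supply, so you should either carry it out or simply cite the final proposition of Section~\ref{sec:hat}, which already states both conclusions.
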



\section{Comparison between the Lyapunov exponents of $\Sigma$ and $\bar\Sigma$}\label{sec:bar}

We prove in this section that $\lambda(\bar\Sigma) \leq \liminf_{\e\to 0^+} \lambda(\Sigma_\e)$ (Proposition~\ref{epsbehavior}). 
Strictly speaking, in view of Proposition~\ref{check} given below, this is not necessary for the proof of Theorem~\ref{thm:main}, but we prefer to provide the proof of this result since it illustrates, in a simplified framework, some of the 
ideas used in the proof of the inequality $\lambda(\check\Sigma) \leq \liminf_{\e\to 0^+} \lambda(\Sigma_\e)$.

The following lemma recalls a rather classical relation between Lyapunov exponents of linear switching systems that will be useful in the sequel. We provide its proof for completeness. \\

\begin{lemma}\label{shift}
Consider a linear switching system~$\Sigma_{\N}$ as in \eqref{general}.
Then, for every $\mu\in\R$, $\lambda(\Sigma_{{\N}+\mu I_n})=\mu +\lambda(\Sigma_{\N})$. 
\end{lemma}

\begin{proof}
Let $\mu\in\R$. Observe that $t\mapsto x(t)$ is a trajectory of~$\Sigma_{\N}$ if and only if $t\mapsto e^{\mu t} x(t)$ is a trajectory of 
$\Sigma_{{\mathcal N}+\mu I_n}$. 
By consequence, 
\begin{align*}
    \lambda(\Sigma_{{\mathcal N}+\mu I_n})&=\limsup_{t\to+\infty}\frac{1}{t}\sup_{N\in \S_{\N+\mu I_n}}\log\|\Phi_{N}(t,0)\|\\
    &=\limsup_{t\to+\infty}\frac{1}{t}\sup_{N\in \S_{\N}}\log\|e^{\mu t}\Phi_{N}(t,0)\|\\
    &=\mu +\lambda(\Sigma_{\mathcal N}),
\end{align*} 
concluding the proof.
\end{proof}

\begin{proposition}\label{epsbehavior}
Suppose that Assumption~\ref{UGES-fast} holds. Then $\lambda(\bar\Sigma)\leq \liminf_{\e\to 0^+} \lambda(\Sigma_\e)$. 
\end{proposition}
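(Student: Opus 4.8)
The plan is to show that every trajectory $\bar x(\cdot)$ of $\bar\Sigma$ can be shadowed, on any fixed time interval $[0,T]$ and up to an error vanishing as $\e\to 0$, by the $x$-component of a trajectory of $\Sigma_\e$ driven by the \emph{same} switching signal. By Remark~\ref{rem:pseudo-fenichel} it suffices to work with piecewise-constant signals $\sigma=(\begin{smallmatrix}A&B\\C&D\end{smallmatrix})$, and — after the reduction described below — with signals that are constant on $[0,T]$. Concretely, fix $\mu<\lambda(\bar\Sigma)$; by \eqref{eq:JSR} there is $T>0$ and a piecewise-constant signal $\bar\sigma$ on $[0,T]$ with $\frac1T\log\rho(\Phi_{\bar A}(T,0))>\mu$, where $\bar A(t)=A(t)-B(t)D(t)^{-1}C(t)$. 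I would then construct, for each small $\e$, a signal $M_\e\in\S_\M$ on $[0,T]$ such that the $x$-component $x_\e(\cdot)$ of the corresponding trajectory of $\Sigma_\e$ satisfies $\|x_\e(T)-\Phi_{\bar A}(T,0)x_\e(0)\|\le\omega(\e)\,|x_\e(0)|$ with $\omega(\e)\to0$. Periodizing $M_\e$ with period $T$ and using Gelfand's formula then gives $e^{T\lambda(\Sigma_\e)}\ge\rho(\Phi^\e_{M_\e}(T,0))\ge\rho(\Phi_{\bar A}(T,0))-C\omega(\e)$, whence $\liminf_{\e\to0}\lambda(\Sigma_\e)\ge\frac1T\log\rho(\Phi_{\bar A}(T,0))>\mu$; letting $\mu\uparrow\lambda(\bar\Sigma)$ concludes.

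The core is the shadowing estimate on a single interval where $\sigma\equiv(\begin{smallmatrix}A&B\\C&D\end{smallmatrix})$ is constant with $D$ Hurwitz. Here $\Sigma_\e$ is the autonomous singularly perturbed linear system $\dot x=Ax+By$, $\e\dot y=Cx+Dy$, for which classical Tikhonov/center-manifold analysis applies: the slow manifold $y=-D^{-1}Cx$ is exponentially attracting at rate $\sim c/\e$ for the fast variable, and on it the $x$-dynamics is exactly $\dot x=\bar Ax$ with $\bar A=A-BD^{-1}C$. I would make this quantitative: writing $w=y+D^{-1}Cx$, one gets $\dot x=\bar Ax+Bw$ and $\e\dot w=Dw+\e D^{-1}C(\bar A x+Bw)$, so that starting from $w(0)$ bounded by a constant times $|x(0)|$ (e.g.\ initializing $y$ on the manifold, or simply using that $|y|$ stays $O(|x|)$ after an initial layer of length $O(\e)$), Gronwall-type bounds give $|w(t)|\le (e^{-ct/\e}+C\e)\,\sup_{[0,T]}|x|$ and hence $|x_\e(T)-e^{T\bar A}x_\e(0)|\le C\e|x_\e(0)|$ on the fixed interval $[0,T]$. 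For a piecewise-constant $\bar\sigma$ with finitely many pieces one concatenates these estimates piece by piece, taking the initial fast-layer correction at each switching time, at the cost of replacing $C\e$ by $C(\e+e^{-c\tau/\e})$ where $\tau$ is the minimal piece length — still $o(1)$. An alternative, cleaner route is to invoke the sub-approximation machinery of Section~\ref{czech}: since the modes of $\bar\Sigma$ are modes of $\check\Sigma$ (as asserted in the introduction and proved there), and Proposition~\ref{check} gives $\lambda(\check\Sigma)\le\liminf_\e\lambda(\Sigma_\e)$, the claim is immediate; but as the authors note, the point of this section is to give the self-contained simpler argument.

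The main obstacle is handling the boundary/initial layer uniformly over the class of admissible signals and over $\e$: one must ensure that the $O(e^{-ct/\e})$ transient is controlled by constants $c>0$ and $C$ depending only on $\M$ (not on the particular matrices chosen), which is where Assumption~\ref{UGES-fast} — giving a \emph{uniform} exponential decay rate for $\Sigma_D$, and in particular a uniform bound on $\|D^{-1}\|$ and a common quadratic Lyapunov function estimate $y^\top P(D)y$ with uniform constants — is essential. A secondary technical point is that periodizing $M_\e$ requires the estimate to hold with $x_\e(0)$ arbitrary; since the whole construction is linear and homogeneous this causes no difficulty, but it must be stated carefully so that the constant in $\omega(\e)|x_\e(0)|$ is genuinely independent of the initial condition. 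Once these uniformities are in place, the Gelfand/periodization step is routine.
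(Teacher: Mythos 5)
Your overall strategy is the same as the paper's: a Tikhonov-type change of variables $w=y+D^{-1}Cx$ on each constant piece, exponential decay of the fast variable plus a variation-of-constants/Gronwall bound, concatenation over the pieces, and then periodization combined with \eqref{eq:gelfand} and continuity of the spectral radius. (The paper additionally inserts an $O(\e)$ correction $\e P_\e x$ into the change of variables so that the $z$-equation decouples completely, and works with the $\mu$-shifted systems of Lemma~\ref{shift} so as to reduce to the case ``spectral radius $>1$''; neither refinement changes the substance, and your direct use of $e^{T\lambda(\Sigma_\e)}\ge\rho(\Phi^\e_{M_\e}(T,0))$ is legitimate.)

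There is, however, a genuine gap in the step
$\rho(\Phi^\e_{M_\e}(T,0))\ge\rho(\Phi_{\bar A}(T,0))-C\omega(\e)$.
The matrix $\Phi^\e_{M_\e}(T,0)$ is $(n+m)\times(n+m)$ while $\Phi_{\bar A}(T,0)$ is $n\times n$, and the estimate $|x_\e(T)-\Phi_{\bar A}(T,0)x_\e(0)|\le\omega(\e)|x_\e(0)|$ controls only the $x$-rows of the flow, and only for initial data with $y_\e(0)$ tied to $x_\e(0)$ (for arbitrary $y_\e(0)$ the right-hand side must be $\omega(\e)|(x_\e(0),y_\e(0))|$). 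To compare spectral radii you need convergence of the \emph{full} flow matrix to a matrix whose nonzero spectrum is that of $\Phi_{\bar A}(T,0)$. This requires two additional estimates that your sketch omits: (i) the dependence of $(x_\e(T),y_\e(T))$ on $y_\e(0)$ is $O(\e)+O(e^{-\alpha T/\e})$, i.e.\ the right block column of the flow tends to $\left(\begin{smallmatrix}0\\0\end{smallmatrix}\right)$; and (ii) an estimate on $y_\e(T)$ itself, showing $y_\e(T)\approx -D_N^{-1}C_N\,\Phi_{\bar A}(T,0)x_\e(0)$ where $D_N,C_N$ are the values on the last piece. With (i) and (ii) the full flow converges to
\[
\begin{pmatrix}\Phi_{\bar A}(T,0)&0\\ -D_N^{-1}C_N\Phi_{\bar A}(T,0)&0\end{pmatrix},
\]
whose nonzero eigenvalues are exactly those of $\Phi_{\bar A}(T,0)$, and the spectral-radius comparison goes through (this is estimate \eqref{estimate4} and the displayed matrix bound in the paper's proof). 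Without this, one cannot simply restrict attention to trajectories initialized on the slow manifold, because after one period the trajectory is only $O(\e)$-close to the manifold and the errors would have to be tracked over infinitely many periods. The gap is fixable within your framework, but the $y$-block estimates are not optional.
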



\begin{proof}
For $\mu\in\R$ and $\e>0$ consider the switching systems  
\begin{equation}\label{xypdot}
\Sigma_{\e,\mu}:\quad \begin{array}{rll}
\dot x(t)&=&A_{\mu}(t)x(t)+B(t)y(t), \\
\varepsilon \dot y(t)&=&C(t)x(t)+D_{\mu}(t)y(t), 
\end{array}
\end{equation}
and 
\begin{align*}
\bar\Sigma_{\mu}:  \dot {\bar x}(t)&=\left(A_{\mu}(t)-B(t)D^{-1}(t)C(t)\right)\bar x(t), 
\end{align*}
where {$(\begin{smallmatrix}A&B\\C&D\end{smallmatrix})\in \S_\M$,}
\[A_{\mu}(t)=A(t)+\mu I_{n}, \quad \mbox{and} \quad D_{\mu}(t)=D(t)+\e\mu I_{m}.\] 
It follows from Lemma~\ref{shift} that 
\begin{equation*}
    \lambda(\Sigma_{\e,\mu})= \lambda(\Sigma_{\e})+\mu \quad {\rm and } \quad \lambda(\bar \Sigma_{\mu})= \lambda(\bar\Sigma)+\mu.
\end{equation*}
Fix, for now, $\mu\in \R$, $T>0$, and a piecewise-constant switching signal $\s=(\begin{smallmatrix}A&B\\C&D\end{smallmatrix})\in \S_\M$. 
Denote by $t_1<\dots<t_N$ the switching instants of $\s$ within the interval $[0,T]$ and set $t_0=0$, $t_{N+1}=T$. 
Following a classical approach (see e.g. \cite{kokotovic1975riccati}), we introduce the variable
\begin{equation}\label{trans}
z(t)=y(t)+D_{\mu}^{-1}(t)C(t)x(t)+\varepsilon P_{\e}(t) x(t), 
\end{equation}
where $P_{\e}$ is 
constant on each interval $[t_k,t_{k+1})$, $k=0,\dots,N$, and is
chosen in such a way 
that $\Sigma_{\e,\mu}$ is equivalently represented on each interval $[t_k,t_{k+1})$ as
\begin{align} 
\dot x(t)&=\left(M_{\mu}(t)-\e B(t)P_{\e}(t)\right)x(t)+B(t)z(t),\label{xeqdot}\\
\e \dot z(t)&=\left(D_{\mu}(t)+\e Q_{\e}(t)B(t)\right)z(t), \label{yeqdot} 
\end{align} 
where $M_{\mu}=A_{\mu}-BD_{\mu}^{-1}C$, $Q_{\e}=D_{\mu}^{-1}C+\e P_{\e}$, and $\|P_\e(t)\|$ is upper bounded uniformly with respect to $t\in [0,T]$ and $\e$ small enough.
 System~\eqref{xeqdot}-\eqref{yeqdot} is discontinuous at the instants of switching, since the variable $z$ depends on $\sigma$.
Recall that $\Phi_{\e^{-1}{D_\mu}}$ denotes
the flow associated with $\frac{1}{\e}D_{\mu}$. Thanks to Assumption~\ref{UGES-fast},  there exist $c,\alpha>0$ such that for $\e$ small enough and for every $0<s<t$
\begin{equation}\label{phieps}
    \|\Phi_{\e^{-1}{D_\mu}}
(t,s)\|\leq ce^{-\frac{\alpha}{\e} (t-s)}. 
\end{equation}
Observe that there exists a positive constant $K$ 
independent of $\e$ (but possibly depending on $T,\M,\mu$) 
such that for every $(x_0,y_0)\in\R^n\times\R^m$ and  every $\e>0$ small enough
one has 
\begin{equation}\label{eq:wiK}
|(x(t),z(t))|\leq K|(x_0,y_0)|,\qquad t\in [0,T],
\end{equation}
where $(x(\cdot),y(\cdot))$ is the trajectory of $\Sigma_{\e,\mu}$ 
associated with $\s$ and the initial condition $(x_0,y_0)$ and $z(\cdot)$ is given by \eqref{trans}.
 By a slight abuse of notation, in what follows we still use $K$ to denote possibly larger constants 
independent of $\e$. 
For $t\in [t_{k},t_{k+1})$, by applying the variation of constant formula to \eqref{yeqdot} and using~\eqref{trans} for $t=t_k$,  we have 
\begin{align*}
    z(t)={}&
    \Phi_{\e^{-1}{D_\mu}}(t,t_{k})z(t_k)
    +\int_{t_{k}}^{t}{
    \Phi_{\e^{-1}{D_\mu}}(t,s)Q_{\e}(s)B(s)z(s)}ds.
\end{align*}
As an immediate consequence of~\eqref{phieps} and \eqref{eq:wiK}, one gets that 
\begin{equation}\label{estimate1}
|z(t)|\leq (e^{-\frac{\alpha}{\e}(t-t_k)}+\e) K |(x_0,y_0)|, \,\forall t\in [t_k,t_{k+1}).   
\end{equation}
By applying the variation of constant formula to~\eqref{xeqdot}, one deduces that, 
for every $t\in[0,T]$, 
\begin{align*}
x(t)-\bar x(t)
&=\int_{0}^{t}\Phi_{\bar M_{\mu}}(t,s)B(s)
\left(D^{-1}(s)-D_{\mu}^{-1}(s)\right)C(s)x(s)ds
-\e\int_{0}^{t}\Phi_{\bar M_\mu}(t,s)B(s) P_{\e}(s)x(s)ds\\
&+\int_{0}^{t}\Phi_{\bar M_\mu}(t,s)B(s)z(s)ds, 
\end{align*}
where 
$\bar M_{\mu}:=A_{\mu}-BD^{-1}C$.
By the definition of $D_{\mu}$ we have that
\begin{equation}
    \label{Dmu}
    \|D^{-1}(t)-D_{\mu}^{-1}(t)\|\le \e K, \qquad \forall\, t\in [0,T].
\end{equation}
Hence, using estimates ~\eqref{eq:wiK} and \eqref{estimate1}, one deduces that 
\begin{equation}\label{estimate2}
|x(t)-\bar x(t)
|\leq \e K|(x_0,y_0)|.
\end{equation}

From~\eqref{trans} together with  \eqref{estimate1}, \eqref{Dmu}, and \eqref{estimate2} we obtain that 
\begin{align}
|y(T)+D^{-1}_{N}C_{N}\bar x(T)|\nonumber
&\leq
|z(T)|+K|x(T)-\bar x(T)|+
\e K |(x_0,y_0)|\leq 
(e^{-\frac{\alpha}{\e}(T-t_N)}+\e) K |(x_0,y_0)|\nonumber\\
&\le \e  K |(x_0,y_0)|\label{estimate4}
\end{align}
for $\e$ sufficiently small, where $ (\begin{smallmatrix} A_N&B_N\\C_N&D_N\end{smallmatrix})$ is the value of $\s$ on $[t_N,T]$.


If $\Phi^\e_{\s,\mu}$ denotes the flow of~\eqref{xypdot} associated with $\s$,  one deduces from~\eqref{estimate2} and~\eqref{estimate4} that 
\begin{equation*}
\left\|\Phi^\e_{\s,\mu}(T,0)-\left(\begin{array}{lll}\Phi_{\bar M_\mu} (T,0) & 0\vspace{3pt}\\  -D_N^{-1}C_{N}\Phi_{\bar M_\mu} (T,0)&0\end{array}\right)\right\|\leq \e K.
\end{equation*}

Let $\mu$ be any constant such that
$\lambda(\bar\Sigma)+\mu=\lambda(\bar\Sigma_\mu)>0$, and choose the time 
$T>0$ and the piecewise-constant switching signal $\s$ 
so that the spectral radius of $\Phi_{\bar M_\mu}(T,0)$ is larger than one (this is possible thanks to \eqref{eq:JSR}). 
By continuity of the spectral radius, 
it follows that 
$\rho(\Phi^\e_{\s,\mu}(T,0))$ is also larger than one for $\e$ small enough.
Then one deduces from \eqref{eq:gelfand} that $\lambda(\Sigma_\e)+\mu=\lambda(\Sigma_{\e,\mu})> 0$. 
The conclusion follows by arbitrariness of $\mu>-\lambda(\bar \Sigma)$. 
\end{proof}

%
%
%
%
%
\section{Comparison between the Lyapunov exponents of $\Sigma$ and $\check\Sigma$}\label{czech}

We have the following preliminary results.

\begin{lemma}\label{paolo-lemma}
Under Assumption~\ref{UGES-fast} the set $\mathcal{\check M}$ is bounded.
\end{lemma}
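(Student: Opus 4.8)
The goal is to bound $\|\Lambda(T,\sigma)\|$ by a constant that does not depend on $T>0$ or on $\sigma=(\begin{smallmatrix}A&B\\C&D\end{smallmatrix})\in\S_\M$. First I would fix the data. Since $\M$ is compact there is $R>0$ with $\|A(s)\|,\|B(s)\|,\|C(s)\|\le R$ for a.e.\ $s$, and Assumption~\ref{UGES-fast}, together with the shift-invariance of $\S_{\M_D}$, provides $c,\alpha>0$ such that $\|\Phi_D(t,s)\|\le c\,e^{-\alpha(t-s)}$ for every $0\le s\le t$ and every $D\in\S_{\M_D}$. These estimates immediately give $\|\Lambda_0(s,\sigma)\|\le cR/\alpha$ for all $s>0$, hence $\|\Lambda_1(T,\sigma)\|\le (R+cR^2/\alpha)\,T$, and likewise $\|\Lambda_2(T,\sigma)\|\le cR\,T$ (using $e^{-\alpha s}\le 1$). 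Thus $\|\Lambda_1(T,\sigma)\|/T$ is already uniformly bounded, and the whole problem reduces to controlling the term $\Lambda_2(T,\sigma)(I_m-\Phi_D(T,0))^{-1}\Lambda_0(T,\sigma)$, i.e.\ to controlling $(I_m-\Phi_D(T,0))^{-1}\Lambda_0(T,\sigma)$.

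The main obstacle, and really the only delicate point, is that $(I_m-\Phi_D(T,0))^{-1}$ blows up as $T\to 0^+$, because $\Phi_D(T,0)\to I_m$; boundedness of $\Lambda(T,\sigma)$ must come from a cancellation with the factor $\Lambda_0(T,\sigma)=O(T)$. To make this cancellation precise I would recognize $(I_m-\Phi_D(T,0))^{-1}\Lambda_0(T,\sigma)x_0$ as the initial value of the unique $T$-periodic solution of the forced fast system $\dot y=D(s)y+C(s)x_0$ when $\sigma|_{[0,T]}$ is extended $T$-periodically. Concretely, let $\hat\sigma\in\S_\M$ be the $T$-periodization of $\sigma|_{[0,T]}$, with fast part $\hat D\in\S_{\M_D}$. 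Using $\|\Phi_D(T,0)^k\|=\|\Phi_{\hat D}(kT,0)\|\le c\,e^{-\alpha kT}$ and Gelfand's formula (as in the Remark following Definition~\ref{def:LSS-lyapunov}), one gets $\rho(\Phi_D(T,0))\le e^{-\alpha T}<1$ for every $T>0$; in particular $I_m-\Phi_D(T,0)$ is invertible and $(I_m-\Phi_D(T,0))^{-1}=\sum_{j\ge 0}\Phi_D(T,0)^j$.

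Then, for a fixed $x_0\in\R^n$, I would consider $y(\tau)=\int_0^\tau\Phi_{\hat D}(\tau,s)\hat C(s)x_0\,ds$, the solution of the forced fast system with $y(0)=0$. By $T$-periodicity, the variation-of-constants formula over each $[jT,(j+1)T]$ gives the affine recursion $y((j+1)T)=\Phi_D(T,0)\,y(jT)+\Lambda_0(T,\sigma)x_0$, hence $y(kT)=\bigl(\sum_{j=0}^{k-1}\Phi_D(T,0)^j\bigr)\Lambda_0(T,\sigma)x_0$; on the other hand the exponential bound on $\Phi_{\hat D}$ yields $|y(\tau)|\le (cR/\alpha)|x_0|$ for all $\tau\ge 0$. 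Letting $k\to\infty$ gives $\bigl|(I_m-\Phi_D(T,0))^{-1}\Lambda_0(T,\sigma)x_0\bigr|\le (cR/\alpha)|x_0|$, i.e.\ $\|(I_m-\Phi_D(T,0))^{-1}\Lambda_0(T,\sigma)\|\le cR/\alpha$. Combining with the first-paragraph estimates, $\|\Lambda(T,\sigma)\|\le \tfrac1T\bigl(\|\Lambda_1(T,\sigma)\|+\|\Lambda_2(T,\sigma)\|\,cR/\alpha\bigr)\le R+cR^2/\alpha+c^2R^2/\alpha$, which is independent of $T$ and $\sigma$, so $\mathcal{\check M}$ is bounded. (For $T$ bounded away from $0$ one could instead simply use $\|(I_m-\Phi_D(T,0))^{-1}\|\le(1-c\,e^{-\alpha T})^{-1}$, but the periodic-orbit argument is what covers the $T\to 0^+$ regime, and it does so uniformly.)
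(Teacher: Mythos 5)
Your proof is correct, and the delicate step is handled by a genuinely different argument than the one in the paper. Both proofs agree on the easy bounds $\|\Lambda_0(s,\sigma)\|\le cR/\alpha$, $\|\Lambda_1(T,\sigma)\|=O(T)$, $\|\Lambda_2(T,\sigma)\|=O(\min\{1,T\})$, and both identify the $T\to 0^+$ degeneration of $(I_m-\Phi_D(T,0))^{-1}$ as the only real issue. The paper attacks the inverse head-on: it regroups the Neumann series $\sum_{k\ge 0}\Phi_D(T,0)^k$ into blocks of length $\floor{\bar T/T}+1$ (with $\bar T=\log(2c)/\alpha$ chosen so that the flow over time $\bar T$ has norm at most $1/2$), obtaining $\|(I_m-\Phi_D(T,0))^{-1}\|\le 2c(1+\bar T/T)$, and then cancels the resulting $1/T$ against the product $\|\Lambda_2\|\,\|\Lambda_0\|=O(\min\{1,T\}^2)$. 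You instead never estimate the inverse in isolation: by recognizing $\bigl(\sum_{j=0}^{k-1}\Phi_D(T,0)^j\bigr)\Lambda_0(T,\sigma)x_0$ as the value at time $kT$ of the solution of the forced fast system driven by the $T$-periodization of $\sigma$, starting from $0$, you transfer the uniform bound $cR|x_0|/\alpha$ on that solution to the limit and obtain $\|(I_m-\Phi_D(T,0))^{-1}\Lambda_0(T,\sigma)\|\le cR/\alpha$ uniformly in $T$ and $\sigma$. This is arguably cleaner (no case distinction on $T$, no block-resummation, and only one factor of $\Lambda_0$ or $\Lambda_2$ is needed to absorb the singularity), and it has the conceptual bonus of tying the formula for $\Lambda(T,\sigma)$ to the periodic attractor of the fast dynamics, which is the same mechanism underlying the sets $K(x)$ used later in the paper. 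The paper's route, on the other hand, yields an explicit quantitative bound on $\|(I_m-\Phi_D(T,0))^{-1}\|$ itself, which is a slightly stronger piece of information. Your supporting steps (the exponential bound applies to the periodized signal since it stays in $\S_{\M_D}$; $\rho(\Phi_D(T,0))\le e^{-\alpha T}<1$ via Gelfand; the affine recursion $y((j+1)T)=\Phi_D(T,0)y(jT)+\Lambda_0(T,\sigma)x_0$) are all sound.
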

\begin{proof}
By Assumption~\ref{UGES-fast} we have that 
\begin{equation}\label{eq-flow-bound}
\|\Phi_D(t_2,t_1)\|\leq ce^{-\alpha (t_2-t_1)}
\end{equation} 
for every $t_2\geq t_1$, for some constants $\alpha>0$ and $c\geq 1$ independent of the switching law $D\in \S_{\M_D}$. As a consequence of \eqref{eq-flow-bound} and the boundedness of $\M$, we have
$\|\Lambda_0(T,\sigma)\|\leq C_1 \min\{1,T\}$ and $\|\Lambda_2(T,\sigma)\|\leq C_1 \min\{1,T\}$ for some $C_1>0$, and $\|\Lambda_1(T,\sigma)\|\leq C_2 T$ for some $C_2>0$. Moreover 
since $\rho(\Phi_D(T,0))<1$ for every $T>0$ and $D\in \S_{\M_D}$ according to \eqref{eq:gelfand}, $\left(I_m-\Phi_D(T,0)\right)^{-1}$ is well defined and expandable as an absolutely convergent power series in $\Phi_D(T,0)$. Letting $\bar T = \frac{\log(2 c)}{\alpha}$ so that $\|\Phi_D(\tau,0)\|\leq\frac12$ for every $\tau\geq \bar T$, we can write
\begin{align*}
(I_m-&\Phi_D(T,0))^{-1} =\sum_{k\geq 0} \Phi_D(T,0)^k \\
& =\sum_{h=0}^{\floor{\frac{\bar T}{T}}} \Phi_D(T,0)^h + \Phi_D(T,0)^{\floor{\frac{\bar T}{T}}+1} \left(\sum_{h=0}^{\floor{\frac{\bar T}{T}}} \Phi_D(T,0)^h\right) \\
&+ \Phi_D(T,0)^{2({\floor{\frac{\bar T}{T}}+1})} \left(\sum_{h=0}^{\floor{\frac{\bar T}{T}}} \Phi_D(T,0)^h\right)+\dots\\
&= \left(\sum_{k\geq 0} \Phi_D(T,0)^{(\floor{\frac{\bar T}{T}}+1)k}\right)\left(\sum_{h=0}^{\floor{\frac{\bar T}{T}}} \Phi_D(T,0)^h\right).
\end{align*}
As $\Phi_D(T,0)^h$ corresponds to the flow of $\Sigma_D$ 
for a $T$-periodic signal at time $hT$ we have $\|\Phi_D(T,0)^h\|\leq c$ by \eqref{eq-flow-bound}. Hence
\[
\left\|\sum_{h=0}^{\floor{\frac{\bar T}{T}}} \Phi_D(T,0)^h\right\|\leq \left(1+\floor{\frac{\bar T}{T}}\right)c\leq c + \frac{c\bar T}{T}.
\]
Moreover, as $(\floor{\frac{\bar T}{T}}+1)T\geq \bar T$,
\begin{align*}
\left\|\sum_{k\geq 0} \Phi_D(T,0)^{(\floor{\frac{\bar T}{T}}+1)k}\right\|&\leq \sum_{k\geq 0}\|\Phi_D(T,0)^{(\floor{\frac{\bar T}{T}}+1)}\|^k\leq \sum_{k\geq 0}\frac1{2^k}=2.
\end{align*}
Summing up
\begin{align*}
\|\Lambda(T,\s)\|&\leq C_2+\frac{C_1^2\min\{1,T\}^2 (2c+2c\bar{T}/T)}{T}\\
&=C_2+2cC_1^2\min\left\{\frac1T+\frac{\bar T}{T^2},T+\bar T\right\}\\
&\leq C_2+2cC_1^2(1+\bar T),
\end{align*}
concluding the proof of the lemma.
\end{proof}

\begin{lemma}\label{lem:smile}
Let $T>0$, $\sigma = (\begin{smallmatrix}
A&B\\
C&D
\end{smallmatrix})\in \S_\M$, and $\mu\in\R$. 
For every $\e>0$ denote by $\mathscr{M}(\e)$ the flow at time $\e T$ of $\Sigma_{\e,\mu}$ defined in \eqref{xypdot} and  corresponding to the signal $\sigma(\cdot/\e)$.
Then there exists $P(\e)$ of the form 
\begin{equation}
\label{eq:Peps}    
P(\e)=\begin{pmatrix}I_n & 0\\
Q(\e) & I_m \end{pmatrix}
\end{equation}
such that
\begin{align}
P(\e)^{-1}\mathscr{M}(\e)P(\e)=
&\begin{pmatrix} I_n+\e T (\Lambda(T,\sigma)+\mu I_n)+O(\e^2)&O(\e)\\
0&\Phi_D(T,0)+O(\e) \end{pmatrix},\label{eq: ptrans}
\end{align}
where 
\begin{equation}\label{Qe}
Q(\e)=\Big(I_m-\Phi_D(T,0)\Big)^{-1}\Lambda_0(T,\sigma)+O(\e)
\end{equation}
and the functions $O(\e^k)$ are such that $\|O(\e^k)\|\le C\e^k$ for $\e$ small, for some positive constant $C$ independent of $\e$ and $\s$. 
\end{lemma}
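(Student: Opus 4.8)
The plan is to pass to the fast time scale $\tau=t/\e$, so that $\mathscr M(\e)$ becomes the monodromy matrix $\Phi_{N_\e}(T,0)$ of a linear switching system whose generator $N_\e(\tau)=N_0(\tau)+\e N_1(\tau)$ is an $\e$‑perturbation of the block lower triangular generator $N_0=(\begin{smallmatrix}0&0\\C&D\end{smallmatrix})$, the perturbation being $N_1=(\begin{smallmatrix}A+\mu I_n&B\\0&\mu I_m\end{smallmatrix})$. Since $N_0$ has vanishing $(1,1)$ block, its flow is
\[
\Phi_{N_0}(\tau,s)=\begin{pmatrix} I_n & 0 \\ \int_s^\tau \Phi_D(\tau,r)C(r)\,dr & \Phi_D(\tau,s)\end{pmatrix},
\]
so in particular $\Phi_{N_0}(T,0)=(\begin{smallmatrix} I_n & 0\\ \Lambda_0(T,\sigma) & \Phi_D(T,0)\end{smallmatrix})$. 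Expanding by Duhamel's formula with one iteration,
\[
\mathscr M(\e)=\Phi_{N_0}(T,0)+\e\int_0^T\Phi_{N_0}(T,s)N_1(s)\Phi_{N_0}(s,0)\,ds+O(\e^2),
\]
the second–order remainder being uniform in $\sigma$ because $\mathcal M$ is compact (so $\|N_\e\|$ and $\|\Phi_{N_\e}(s,0)\|$, $s\in[0,T]$, are bounded uniformly for $\e\le 1$) and $\|\Phi_D(T,s)\|\le c$ by~\eqref{eq-flow-bound}. Multiplying the three blocks and integrating in $s$, the $(1,1)$ block of the integral is $\int_0^T(A(s)+B(s)\Lambda_0(s,\sigma))\,ds+\mu T I_n=\Lambda_1(T,\sigma)+\mu TI_n$ and the $(1,2)$ block is $\int_0^TB(s)\Phi_D(s,0)\,ds=\Lambda_2(T,\sigma)$, so, abbreviating $\Lambda_i=\Lambda_i(T,\sigma)$,
\[
\mathscr M(\e)=\begin{pmatrix} I_n+\e(\Lambda_1+\mu TI_n)+O(\e^2) & \e\Lambda_2+O(\e^2)\\ \Lambda_0+O(\e) & \Phi_D(T,0)+O(\e)\end{pmatrix}=:\begin{pmatrix} \mathscr M_{11}&\mathscr M_{12}\\ \mathscr M_{21}&\mathscr M_{22}\end{pmatrix}.
\]

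For $P(\e)$ of the form~\eqref{eq:Peps} a direct computation gives the $(2,1)$ block of $P(\e)^{-1}\mathscr M(\e)P(\e)$ as $\mathscr M_{21}-Q\mathscr M_{11}+\mathscr M_{22}Q-Q\mathscr M_{12}Q$, so requiring it to vanish amounts to an algebraic Riccati equation for $Q$. Moving the $O(\e)$ parts of $\mathscr M_{11},\mathscr M_{22},\mathscr M_{21}$ and the whole of $\mathscr M_{12}=O(\e)$ to the right-hand side, and using that $\rho(\Phi_D(T,0))\le e^{-\alpha T}<1$ by Assumption~\ref{UGES-fast} (so that $I_m-\Phi_D(T,0)$ is invertible), this equation becomes a fixed-point equation $Q=\Psi_\e(Q):=(I_m-\Phi_D(T,0))^{-1}(\Lambda_0+\e\mathcal R_\e(Q))$, where $\mathcal R_\e$ is a polynomial of degree two in $Q$ with coefficients bounded uniformly in $\sigma$ and $\e\le1$. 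The key uniform estimate is that $\|(I_m-\Phi_D(T,0))^{-1}\|$ is bounded uniformly in $\sigma$ for the fixed $T$; this is exactly the computation carried out in the proof of Lemma~\ref{paolo-lemma}. With it in hand, for $\e$ small enough, uniformly in $\sigma$, the map $\Psi_\e$ sends the unit ball around $Q_0:=(I_m-\Phi_D(T,0))^{-1}\Lambda_0$ into itself and is a $\tfrac12$‑contraction there; hence it has a unique fixed point $Q(\e)$ with $\|Q(\e)-Q_0\|\le 2\|\Psi_\e(Q_0)-Q_0\|=O(\e)$, uniformly in $\sigma$, which is~\eqref{Qe}.

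Finally, with this choice of $Q(\e)$ the $(2,1)$ block is $0$ by construction, the $(2,2)$ block is $\mathscr M_{22}-Q(\e)\mathscr M_{12}=\Phi_D(T,0)+O(\e)$, the $(1,2)$ block is $\mathscr M_{12}=O(\e)$, and the $(1,1)$ block is
\[
\mathscr M_{11}+\mathscr M_{12}Q(\e)=I_n+\e\big(\Lambda_1+\mu TI_n+\Lambda_2Q_0\big)+O(\e^2)=I_n+\e T\big(\Lambda(T,\sigma)+\mu I_n\big)+O(\e^2),
\]
where in the last step one uses $\Lambda_2Q_0=\Lambda_2(I_m-\Phi_D(T,0))^{-1}\Lambda_0$ together with the definition of $\Lambda(T,\sigma)$; all remainders are uniform in $\sigma$ by the bounds already collected. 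This establishes~\eqref{eq: ptrans}.

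The step I expect to be the main obstacle is solving the Riccati equation for $Q(\e)$ with constants independent of $\sigma$: everything hinges on the uniform bound for $\|(I_m-\Phi_D(T,0))^{-1}\|$ (equivalently, on the uniform spectral gap of the fast flow), which is why Assumption~\ref{UGES-fast} enters exactly through the estimate of Lemma~\ref{paolo-lemma}. The other point that requires care is the bookkeeping of powers of $\e$: one must carry the Duhamel expansion to second order and exploit that $\mathscr M_{12}=O(\e)$, so that the error in the $(1,1)$ block is genuinely $O(\e^2)$ rather than merely $O(\e)$.
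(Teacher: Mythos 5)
Your proof is correct and follows essentially the same route as the paper: rescale time to write $\mathscr M(\e)=\Phi_{N_0+\e N_1}(T,0)$, expand by Duhamel to first order, identify the blocks with $\Lambda_0,\Lambda_1+\mu TI_n,\Lambda_2$, and block-triangularize by a matrix of the form~\eqref{eq:Peps}. The only difference is that you make explicit the step the paper leaves implicit — solving the Riccati equation for $Q(\e)$ by a contraction argument, with uniformity in $\sigma$ coming from the bound on $\|(I_m-\Phi_D(T,0))^{-1}\|$ established in Lemma~\ref{paolo-lemma} — which is a welcome addition rather than a deviation.
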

\begin{proof}
We first prove that $\e\mapsto \mathscr{M}(\e)$ admits a first order expansion 
\begin{equation}\label{DL}
\mathscr{M}(\e)=\mathscr{M}_{0}+\e \mathscr{M}_{1}+O(\e^2),
\end{equation}
for some matrices $\mathscr{M}_{0},\mathscr{M}_{1}$ to be computed.
To see that, we first apply the time rescaling $\tau=t/\e$ and 
we have that $\mathscr{M}(\e)$ is equal to $\Phi_{N_{0}+\e N_{1}}(T,0)$,
where the signals $N_0$ and $N_1$ are defined as
\begin{equation*}
N_{0}(\tau)=
\begin{pmatrix}
0 & 0\\
C(\tau) & D(\tau)
\end{pmatrix},
\, 
N_{1}(\tau)=
\begin{pmatrix}
 A(\tau)+\mu I_n &  B(\tau)\\
0 & \mu I_m
\end{pmatrix}.
\end{equation*}
By the variation of constant formula, one has
\begin{align*}
\Phi_{{N}_{0}+\e N_1}(T,0)= 
 \Phi_{{N}_{0}}(T,0) +\e \int_0^T \Phi_{{N}_{0}}(T,\tau)N_1(\tau)\Phi_{{N}_{0}+\e N_1}(\tau,0)\;d\tau.
\end{align*}
One deduces that 
\eqref{DL} holds true with
\begin{align*}
\mathscr{M}_{0}=\Phi_{N_{0}}(T,0) \quad \mbox{and} \quad \mathscr{M}_{1}=\int_0^T\Phi_{{N}_{0}}(T,\tau)N_{1}(\tau)
\Phi_{{N}_{0}}(\tau,0)\;d\tau.
\end{align*}
It is easy to get that 
\begin{align}
\mathscr{M}_{0}=\begin{pmatrix}I_n&0\\ \Lambda_0(T,\sigma)&
\Phi_D(T,0)\end{pmatrix},\quad \mbox{and} \quad 
 \mathscr{M}_{1}=\begin{pmatrix}\Lambda_1(T,\sigma)+T\mu I_n&\Lambda_2(T,\sigma)\\ \Lambda_3(T,\sigma)&
\Lambda_4(T,\sigma)\end{pmatrix},\label{eq:M1}
    \end{align}
where
\begin{align*}\Lambda_3(T,\sigma)&=\Lambda_0(T,\sigma)\left(\Lambda_1(T,\sigma)+T\mu\right)
-\int_0^T \Phi_D(T,\tau) \Lambda_0(\tau,\s)(A(\tau)+B(\tau)\Lambda_0(\tau,\s))d\tau
\end{align*}
and
\begin{align*}
\Lambda_4(T,\sigma)&=\mu T\Phi_D(T,0)+\Lambda_0(T,\sigma)\Lambda_2(T,\sigma)
-\int_0^T \Phi_D(T,\tau) \Lambda_0(\tau,\s)B(\tau)\Phi_D(\tau,0)d\tau.
\end{align*}
According to~\eqref{DL} and~\eqref{eq:M1}, it 
follows
that~\eqref{eq: ptrans} holds true with 
$P(\e)$ and $Q(\e)$ as in \eqref{eq:Peps} and \eqref{Qe}.
This concludes the proof of the lemma. 
%
\end{proof}

We can now prove the main result of this section.
 
\begin{proposition}
\label{check}
Suppose that Assumption~\ref{UGES-fast} holds. Then $\lambda(\bar\Sigma)\leq \lambda(\check\Sigma)\leq \liminf_{\e\to 0^+} \lambda(\Sigma_\e)$. Moreover, if~$\check\Sigma$ is EU then~$\Sigma$ is $\e$-EU. 
\end{proposition}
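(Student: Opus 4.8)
The statement has three parts, and I would prove them in the following order.

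\emph{Step 1: $\lambda(\bar\Sigma)\le\lambda(\check\Sigma)$.} I would show that $\bar\M\subseteq\check\M$ and then conclude by monotonicity of the maximal Lyapunov exponent with respect to the set of modes (if $\N_1\subseteq\N_2$ then $\S_{\N_1}\subseteq\S_{\N_2}$, so $\lambda(\Sigma_{\N_1})\le\lambda(\Sigma_{\N_2})$). To get the inclusion, fix $(\begin{smallmatrix}A&B\\C&D\end{smallmatrix})\in\M$ and evaluate $\Lambda(T,\sigma)$ for the constant signal $\sigma\equiv(\begin{smallmatrix}A&B\\C&D\end{smallmatrix})$ and any $T>0$: then $\Phi_D(t,s)=e^{(t-s)D}$, and using $\int_0^Te^{uD}du=D^{-1}(e^{TD}-I_m)$ one computes $\Lambda_0(T,\sigma)=(e^{TD}-I_m)D^{-1}C$, $\Lambda_2(T,\sigma)=B(e^{TD}-I_m)D^{-1}$, $\Lambda_1(T,\sigma)=TA+B[(e^{TD}-I_m)D^{-2}-TD^{-1}]C$, and $(I_m-e^{TD})^{-1}\Lambda_0(T,\sigma)=-D^{-1}C$; plugging these into the definition of $\Lambda(T,\sigma)$, the terms $\pm B(e^{TD}-I_m)D^{-2}C$ cancel and one is left with $\Lambda(T,\sigma)=A-BD^{-1}C\in\bar\M$. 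Hence $\bar\M\subseteq\check\M$ and $\lambda(\bar\Sigma)\le\lambda(\check\Sigma)$.

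\emph{Step 2: $\lambda(\check\Sigma)\le\liminf_{\e\to0^+}\lambda(\Sigma_\e)$.} I would adapt the scheme of Proposition~\ref{epsbehavior}. Fix $\mu$ with $\lambda(\check\Sigma)+\mu>0$. By Lemma~\ref{shift}, the switching system with modes $\check\M+\mu I_n$ has Lyapunov exponent $\lambda(\check\Sigma)+\mu>0$, so by \eqref{eq:JSR} and the piecewise-constant reduction of Remark~\ref{rem:pseudo-fenichel} there are $\hat T>0$ and a piecewise-constant signal with values $\Lambda(T_j,\sigma_j)+\mu I_n$ on consecutive intervals of lengths $\delta_1,\dots,\delta_J$ with $\sum_j\delta_j=\hat T$, such that $\Psi:=e^{\delta_J(\Lambda(T_J,\sigma_J)+\mu I_n)}\cdots e^{\delta_1(\Lambda(T_1,\sigma_1)+\mu I_n)}$ satisfies $\rho(\Psi)>1$. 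For $\e>0$ I build a switching signal for $\Sigma_{\e,\mu}$ by concatenating, for $j=1,\dots,J$, exactly $k_j(\e):=\floor{\delta_j/(\e T_j)}$ periodic repetitions of $\sigma_j(\cdot/\e)$; its total length is $\hat T_\e:=\e\sum_jT_jk_j(\e)\to\hat T$ and the corresponding flow is $\mathscr{N}(\e):=\mathscr{M}_J(\e)^{k_J(\e)}\cdots\mathscr{M}_1(\e)^{k_1(\e)}$, where $\mathscr{M}_j(\e)$ is the flow of $\Sigma_{\e,\mu}$ over time $\e T_j$ for $\sigma_j(\cdot/\e)$. By Lemma~\ref{lem:smile}, $\mathscr{M}_j(\e)=P_j(\e)R_j(\e)P_j(\e)^{-1}$ with $R_j(\e)$ block upper triangular (diagonal blocks $I_n+\e T_j(\Lambda(T_j,\sigma_j)+\mu I_n)+O(\e^2)$ and $\Phi_{D_j}(T_j,0)+O(\e)$, off-diagonal block $O(\e)$) and $P_j(\e)$ convergent. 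Since $k_j(\e)\e T_j\to\delta_j$ and $k_j(\e)O(\e^2)\to0$, a matrix-logarithm estimate gives $(I_n+\e T_j(\Lambda(T_j,\sigma_j)+\mu I_n)+O(\e^2))^{k_j(\e)}\to e^{\delta_j(\Lambda(T_j,\sigma_j)+\mu I_n)}$; since $\rho(\Phi_{D_j}(T_j,0))<1$ (by \eqref{eq:gelfand} under Assumption~\ref{UGES-fast}) and $k_j(\e)\to\infty$, working in a norm in which $\Phi_{D_j}(T_j,0)$ is a strict contraction shows $(\Phi_{D_j}(T_j,0)+O(\e))^{k_j(\e)}\to0$; and a geometric-series bound makes the off-diagonal block of $R_j(\e)^{k_j(\e)}$ an $O(\e)$. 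Hence $R_j(\e)^{k_j(\e)}\to(\begin{smallmatrix}e^{\delta_j(\Lambda(T_j,\sigma_j)+\mu I_n)}&0\\0&0\end{smallmatrix})$, and taking the limit in the product one finds $\mathscr{N}(\e)\to\mathscr{N}_0$, a block lower-triangular matrix with $(1,1)$-block $\Psi$ and $(2,2)$-block $0$; thus $\rho(\mathscr{N}(\e))\to\rho(\mathscr{N}_0)=\rho(\Psi)>1$. So $\rho(\mathscr{N}(\e))>1$ for $\e$ small, and since $\mathscr{N}(\e)$ is the flow of $\Sigma_{\e,\mu}$ over time $\hat T_\e$ along a piecewise-constant signal, \eqref{eq:gelfand} gives $\lambda(\Sigma_{\e,\mu})>0$, i.e.\ $\lambda(\Sigma_\e)>-\mu$. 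Letting $\mu\downarrow-\lambda(\check\Sigma)$ concludes.

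\emph{Step 3: if $\check\Sigma$ is EU then $\Sigma$ is $\e$-EU.} I would repeat Step 2 with $\mu=0$: since $\lambda(\check\Sigma)>0$ we obtain $\hat T$, a piecewise-constant signal with $\rho(\Psi)>1$, and $\mathscr{N}(\e)\to\mathscr{N}_0$ with $\rho(\mathscr{N}_0)=\rho(\Psi)>1$. To extract a genuinely exponentially growing trajectory with $\e$-independent constants, fix $\rho_1\in(1,\rho(\Psi))$ which is not the modulus of any eigenvalue of $\Psi$, and let $\Pi_\e$ be the (real) spectral projection of $\mathscr{N}(\e)$ onto the eigenvalues of modulus $>\rho_1$; for $\e$ small these are well defined, of fixed rank, and converge to the analogous $\Pi_0$ for $\mathscr{N}_0$. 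Representing $(\mathscr{N}(\e)|_{\operatorname{Ran}\Pi_\e})^{-k}$ as a contour integral of $\zeta^{-k}(\zeta I-\mathscr{N}(\e)|_{\operatorname{Ran}\Pi_\e})^{-1}$ over fixed circles (radii $\rho_1$ and $2\rho(\Psi)$), on which the resolvents are uniformly bounded because $\mathscr{N}(\e)\to\mathscr{N}_0$, yields $\|(\mathscr{N}(\e)|_{\operatorname{Ran}\Pi_\e})^{-k}\|\le C\rho_1^{-k}$ for all $k\ge1$ with $C$ independent of $\e$, hence $|\mathscr{N}(\e)^kv|\ge C^{-1}\rho_1^k|v|$ for all $v\in\operatorname{Ran}\Pi_\e$ and $k\ge0$. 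Moreover, since the kernel of $\mathscr{N}_0$ is $\{0\}\times\R^m$ and, by the block form of $\mathscr{N}_0$, the projection onto $x$-coordinates is injective on $\operatorname{Ran}\Pi_0$, there is $\kappa>0$ with $|v_x|\ge\kappa|v|$ for $v\in\operatorname{Ran}\Pi_\e$ and $\e$ small. Taking a unit $v_\e\in\operatorname{Ran}\Pi_\e$ as initial condition and the $\hat T_\e$-periodization of the Step-2 signal, the trajectory $z(\cdot)$ of $\Sigma_\e$ satisfies $z(k\hat T_\e)=\mathscr{N}(\e)^kv_\e\in\operatorname{Ran}\Pi_\e$, so $|(z(k\hat T_\e))_x|\ge\kappa C^{-1}\rho_1^k$, and within each period the partial-iteration form of Lemma~\ref{lem:smile} shows $|z(t)|$ stays bounded below by a uniform multiple of $|(z(k\hat T_\e))_x|$ (the $x$-block acts like an invertible exponential $e^{(\cdot)\Lambda(T_j,\sigma_j)}$ with uniformly bounded inverse, while the $y$-component, of the same order as the $x$-component at the start of the period, remains so). This gives $|z(t)|\ge Ce^{\delta t}|z(0)|$ for all $t\ge0$, with $C,\delta>0$ and $\e^\star>0$ independent of $\e$, i.e.\ $\Sigma$ is $\e$-EU.

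\emph{Main obstacle.} The crux is the limit identification $\mathscr{N}(\e)\to\mathscr{N}_0$ of Step 2: the iteration numbers $k_j(\e)$ must be tuned so that the slow ($x$) block of $R_j(\e)^{k_j(\e)}$ converges to the exponential $e^{\delta_j(\Lambda(T_j,\sigma_j)+\mu I_n)}$, while the fast ($y$) block — a strict contraction by Assumption~\ref{UGES-fast} — is annihilated and the coupling term stays $O(\e)$, all uniformly in $\e$. For Step 3 the added difficulty is that $\Sigma_\e$ is singular in the fast direction, so the growth constant $C$ cannot come from an inverse-flow bound (which diverges as $\e\to0$) and must instead be read off the non-degeneracy of the $x$-component on the dominant subspace of $\mathscr{N}(\e)$.
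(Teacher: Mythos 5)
Your proposal is correct and follows the paper's proof almost step for step: the constant-signal evaluation of $\Lambda(T,\sigma)$ for the inclusion $\bar\M\subseteq\check\M$, and then, for the middle inequality, the $\mu$-shift, the choice of $\rho(\Psi)>1$ via \eqref{eq:JSR}, the concatenation of $\floor{\delta_j/(\e T_j)}$ repetitions of $\sigma_j(\cdot/\e)$, and the block-triangular power asymptotics from Lemma~\ref{lem:smile} (slow block $\to$ exponential, contracting fast block $\to 0$, coupling $O(\e)$ by a geometric series) are exactly the paper's argument. The only genuine deviation is in Step 3: to get $\e$-uniform growth along the periodized signal, the paper selects a single (possibly complex) eigenvalue $\alpha_\e$ of $\Upsilon_{\e,t_\e}$ with $|\alpha_\e|>1$ and works with $\mathrm{Re}(v_\e)$, normalized so that $|\mathrm{Re}(\beta v_\e)|\geq|\beta|\,|\mathrm{Re}(v_\e)|$, whereas you bound $\|(\mathscr{N}(\e)|_{\operatorname{Ran}\Pi_\e})^{-k}\|\leq C\rho_1^{-k}$ uniformly in $\e$ by a resolvent contour integral over the boundary of a fixed annulus. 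Both rest on the same convergence of spectral projections (Kato); your variant avoids the real-versus-complex eigenvector case distinction at the price of a functional-calculus estimate, and the remaining ingredients --- the uniform lower bound $|v_x|\geq\kappa|v|$ on the dominant subspace coming from the block structure of $\mathscr{N}_0$, and the intermediate-time control within one period (the paper's \eqref{ups!}), which you correctly flag as needing a partial-iteration refinement of Lemma~\ref{lem:smile} --- coincide with the paper's.
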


\begin{proof}
We first prove the  inequality $\lambda(\bar\Sigma)\leq \lambda(\check\Sigma)$ by showing  that  $
\bar{\mathcal{M}}\subset \mathcal{\check M}$.
To see that, let us check that $A-BD^{-1}C$ belongs to 
$\mathcal{\check M}$ for every  $M=(\begin{smallmatrix} A&B\\C&D\end{smallmatrix})$ in $\mathcal{M}$. Indeed, letting 
$T>0$ and  $\bar{\sigma}\in\S_\M$ constantly equal to $M$ on $[0,T]$, it holds
\begin{equation*}
\Lambda(T,\bar{\sigma})=A-BD^{-1}C.
\end{equation*}

%

Assume that $\mu$ is chosen so that 
\[
\lambda(\check\Sigma)+\mu>0.
\]
Then, according to \eqref{eq:JSR}, there exist $\ell\in\mathbb{N}$, $\ell$ matrices $\Lambda(T_1,\sigma_1),\dots,\Lambda(T_\ell,\sigma_\ell)\in \check{\mathcal{M}}$,
and $\ell$ positive times $t_1,\dots,t_\ell$ so that 
\begin{equation}\label{eq:bigger1}
\rho\Big(e^{t_\ell(\Lambda(T_\ell,\sigma_\ell)+\mu I_n)} \dots e^{t_1(\Lambda(T_1,\sigma_1)+\mu I_n)}\Big)>1.
\end{equation}

Let $\e$ be sufficiently small so that 
$\e T_k<t_k$
for every $k=1,\dots,\ell$,   and denote by 
 $N_k=\floor{\frac{t_k}{\e T_k}}$  the number of intervals of length $\e T_k$ contained in $[0,t_k]$. 

Then, for every $k=1,\dots, \ell$, consider the flow 
$\mathscr{M}_k(\e)$
of system $\Sigma_{\e,\mu}$ (cf.~\eqref{xypdot}) corresponding to the signal $\sigma_k(\cdot/\e)$, evaluated at time  
$\e{T_k}$.
Thanks to Lemma~\ref{lem:smile}, there exists $P_k(\e)$ given by 
$P_k(\e)=\begin{pmatrix}I_n & 0\\
Q_k(\e) & I_m \end{pmatrix}$ 
such that
\begin{align*}
\mathcal{T}_k(\e)=P_k(\e)^{-1}\mathscr{M}_k(\e)P_k(\e)
\end{align*}
satisfies 
\begin{align*}
\mathcal{T}_k(\e)=
\begin{pmatrix} I_n+\e T_k (\Lambda(T_k,\sigma_k)+\mu I_n)+O(\e^2)&O(\e)\\
0&\Phi_D(T_k,0)+O(\e) \end{pmatrix}.
\end{align*}
We repeat $N_k$ times $\sigma_k(\cdot/\e)$ to get a signal on $[0,\e T_k N_k]$ and the corresponding flow of $\Sigma_{\e,\mu}$ at time $\e T_k N_k$ is given by 
\begin{equation}\label{eq:Mk}
\mathscr{M}_k(\e)^{N_k}=P_k(\e)\mathcal{T}_k(\e)^{N_k}P_k(\e)^{-1}.
\end{equation}
We claim that
\begin{equation}\label{eq:puissanceN1}
\mathcal{T}_k(\e)^{N_k}=
\begin{pmatrix}e^{ t_k (\Lambda(T_k,\sigma_k)+\mu I_n)}+O(\e)&O(\e)\\ 0&O(\e)
\end{pmatrix}.
\end{equation}
This follows from the general formula
\[\begin{pmatrix}A_{11}&A_{12}\\0&A_{22}\end{pmatrix}^N=\begin{pmatrix}A_{11}^N&W\\0&A_{22}^N\end{pmatrix},\,
W=\sum_{j=0}^{N-1} A_{11}^jA_{12}A_{22}^{N-j-1},\]
applied to
 $N=N_k$, 
 \[A_{11}=I+B_{11}/N_k+O(\e^2)=e^{\frac{B_{11}}{N_k}}+O(\e^2)\] with $B_{11}=t_k (\Lambda(T_k,\sigma_k)+\mu I_n)$, and
 \[A_{22}=\Phi_D(T_k,0)+O(\e).\] Then, for $\e$ small enough one gets that
\begin{eqnarray}
\|A_{11}^j\|&\le& \Big(1+\frac{2\|B_{11}\|}j\Big)^j\le e^{2\|B_{11}\|},\quad 1\le j\le N_k,\label{eq:A11-1}\\
A_{11}^{N_k}&=&e^{B_{11}}+O(\e).\nonumber 
\end{eqnarray}
Moreover, since $\rho(A_{22})<1$ for $\e$ small, it follows that
\begin{equation}\label{eq:A22}
\|A_{22}^j\|\leq K\lambda^j,\quad 1\leq j\leq N_k,
\end{equation}
for some $K>0$ and $\lambda\in (0,1)$ independent of $\e$ small enough and $k\in\{1,\dots\ell\}$. In particular, \[\|A_{22}^{N_k}\|\leq K\lambda^{\frac{t_k}{\e T_k}-1}=O(\e).\]
Using now \eqref{eq:A11-1}, \eqref{eq:A22}, and recalling that $K$ can be taken so that $\|A_{12}\|\leq K\e$, one deduces that
\begin{align*}
\|W\|\le \sum_{j=0}^{N_k-1}\|A_{11}^j\|\|A_{12}\| \|A_{22}^{N_k-j-1}\|
\le K^2e^{2\|B_{11}\|}\e \sum_{j=0}^{N_k-1} \lambda^{N_k-1-j}=C\e,
\end{align*}
for some $C>0$ independent of $\e$ small enough and $k\in\{1,\dots\ell\}$. This concludes the proof of \eqref{eq:puissanceN1}.

We next use \eqref{Qe} and \eqref{eq:puissanceN1} in \eqref{eq:Mk} to deduce that 
\begin{equation}\label{eq:good-Mk}
\mathscr{M}_k(\e)^{N_k}=\begin{pmatrix}e^{ t_k (\Lambda(T_k,\sigma_k)+\mu I_n)}&0\\
r_k & 0\end{pmatrix}+O(\e),
\end{equation}
where \[r_k=\Big(I_m-\Phi_{D_{\sigma_k}}(T_k,0)\Big)^{-1}\Lambda_0(T_k,\sigma_k)e^{t_k(\Lambda(T_k,\sigma_k)+\mu I_n)}.\] 

Set $t_{\e}=\e\sum_{k=1}^\ell N_kT_k$
 and notice that $t_{\e}$ tends to $t_1+\dots+t_\ell$ as $\e$ tends to zero. 
 We concatenate the $N_k$ times repetitions of $\sigma_k(\cdot/\e)$ for $k=1,\dots,\ell$ to get a signal on $[0,t_{\e}]$ and the corresponding flow of $\Sigma_{\e,\mu}$ at time $t_{\e}$ is given by the matrix product
\[\Upsilon_{\e}=\mathscr{M}_\ell(\e)^{N_\ell}\mathscr{M}_{\ell-1}(\e)^{N_{\ell-1}}\cdots \mathscr{M}_{1}(\e)^{N_{1}}.\]
Using \eqref{eq:good-Mk}, one deduces that
\[
\Upsilon_{\e}=\begin{pmatrix}
e^{t_\ell(\Lambda(T_\ell,\sigma_\ell)+\mu I_n)} \cdots e^{t_1(\Lambda(T_1,\sigma_1)+\mu I_n)}&0\\
r & 0\end{pmatrix}+O(\e),
\]
where the matrix $r$ does not depend on $\e$. 

Using \eqref{eq:bigger1}, one gets that  $\rho(\Upsilon_{\e})>1$ for $\e$ small enough, yielding that 
\[\lambda(\Sigma_\e)+\mu=\lambda(\Sigma_{\e,\mu})>0.
\]
 In particular $\liminf_{\e\to 0^+}\lambda(\Sigma_\e)+\mu\ge 0$. By letting $\mu$ tend to $-\lambda(\check\Sigma)$ from above, one concludes that $\lambda(\check\Sigma) \leq \liminf_{\e\to 0^+} \lambda(\Sigma_\e)$ as desired. 

We are left to show that if~$\check\Sigma$ is EU then~$\Sigma$ is $\e$-EU. 
For this purpose, we take $\mu=0$ in the previous calculations and we observe that the flow $\Upsilon_{\e,t}$ of $\Sigma_\e$ at time $t\in [0,t_{\e}]$ corresponding to the signal defined above satisfies
\begin{align}
\Upsilon_{\e,t}=
\begin{pmatrix}
e^{(t-\sum_{h=1}^{k-1}t_h)\Lambda(T_k,\sigma_k)} \prod_{\ell=1}^{k-1}e^{t_{\ell}\Lambda(T_{\ell},\sigma_{\ell})}+O(\e) & O(\e)\\
r(\e,t) & q(\e,t)\end{pmatrix},\label{ups!}
\end{align}
whenever  $t\in [\sum_{h=1}^{k-1}t_h,\sum_{h=1}^{k}t_h]$, for some matrix functions $r,q$, 
where the terms $O(\e)$ are uniform with respect to $t\in [0,t_{\e}]$. 
The matrix $\Upsilon_{\e,t_\e}$ converges, as $\e$ goes to zero, to 
\[\bar\Upsilon = \begin{pmatrix} e^{t_\ell \Lambda(T_\ell,\sigma_\ell)} \cdots e^{t_1 \Lambda(T_1,\sigma_1)} & 0\\ \bar r & 0\end{pmatrix},\] 
for some matrix $\bar r$.
For $\e$ small enough we construct a trajectory $z_\e(t) = (x_\e(t),y_\e(t))$ of $\Sigma_\e$ satisfying $|z_\e(t)| \geq \hat C e^{\hat\lambda t}|z_\e(0)|>0$ for every $t\geq 0$, with $\hat\lambda \in (0,\lambda(\check\Sigma))$ 
and $\hat C>0$ independent of $\e$. Let {$\pi$} be the sum of the projectors on the generalized eigenspaces associated with the eigenvalues of  $\bar\Upsilon$ of modulus $\rho(e^{t_\ell \Lambda(T_\ell,\sigma_\ell)} \cdots e^{t_1 \Lambda(T_1,\sigma_1)})>1$.
Since $\Upsilon_{\e,t_\e}$ converges to $\bar\Upsilon$ as $\e$ goes to zero, by classical results (see~\cite[Theorem 5.1, Chapter II]{kato}) there exists $\pi_\e$, a sum of projectors on generalized eigenspaces of $\Upsilon_{\e,t_\e}$, satisfying $\lim_{\e\to 0}\pi_\e = \pi$, and the corresponding eigenvalues also converge. Let $v_\e$ be a possibly complex eigenvector of the restriction of $\Upsilon_{\e,t_\e}$ to the image of $\pi_\e$, associated with an eigenvalue $\alpha_\e$. If $\alpha_\e$ is real then $v_\e$ can be taken real as well, otherwise we assume without loss of generality that $v_\e$ 
satisfies $|\mathrm{Re}(v_\e)| = \min_{\theta\in\R}|\mathrm{Re}(e^{i\theta}v_\e)|$.
In particular $|\mathrm{Re}(\beta v_\e)|\geq |\beta| |\mathrm{Re}(v_\e)|$ for every $\beta\in\mathbb{C}$. Note that, for every positive integer $k$, one has that $(\Upsilon_{\e,t_\e})^k v_\e = \alpha_\e^k v_\e$ and $(\Upsilon_{\e,t_\e})^k\bar{v}_\e =\bar{\alpha}_\e^k \bar{v}_\e$ which implies $(\Upsilon_{\e,t_\e})^k \mathrm{Re}(v_\e) = \mathrm{Re}(\alpha_\e^k v_\e)$. 

Consider the trajectory $z_\e(t) = (x_\e(t),y_\e(t))$ of $\Sigma_\e$ obtained applying the flow $\Upsilon_{\e,t}$ to the initial condition $z_\e(0)=\mathrm{Re}(v_\e)$ and repeating periodically after time $t_\e$. Letting $\Pi_x$ be the projection of a vector of $\R^{n+m}$ onto its first $n$ components, it is easy to see that $|\Pi_x v|\geq C|v|$ for every $v$ in the image of $\bar\Upsilon$, where 
\[C=(1+\|\bar r\|^2\|(e^{t_\ell \Lambda(T_\ell,\sigma_\ell)} \cdots e^{t_1 \Lambda(T_1,\sigma_1)})^{-1}\|^2)^{-1/2}.\]
Hence, for every nonnegative integer $h$ and $\e$ small enough, 
\begin{align}
|x_\e(ht_\e)|&= |\Pi_x z_\e(ht_\e)|\nonumber\\
&= |\Pi_x \pi_\e z_\e(ht_\e)|\nonumber\\
&\geq |\Pi_x \pi z_\e(ht_\e)| -\|\Pi_x\|\|\pi-{\pi}_\e\|\,|z_\e(ht_\e)|\nonumber\\
&\geq |\Pi_x \pi z_\e(ht_\e)| -\|\pi-\pi_\e\|\,|z_\e(ht_\e)|\nonumber\\
&\geq C|\pi z_\e(ht_\e)|-\|\pi-{\pi}_\e\|\,|z_\e(ht_\e)|\nonumber\\
&\geq C|z_\e(ht_\e)|-(1+C)\|{\pi}-{\pi}_\e\|\,|z_\e(ht_\e)|\nonumber\\
&\geq \frac{C}{2}|z_\e(ht_\e)|.\label{eq-periodic-1}
\end{align}
As $z_\e(ht_\e) = \mathrm{Re}(\alpha_\e^h v_\e)$ we also have
\begin{equation}
\label{eq-periodic-2}
|x_\e(ht_\e)|\geq \frac{C}{2}|\alpha_\e|^h |z_\e(0)|.
\end{equation}
By~\eqref{ups!}, \eqref{eq-periodic-1}, and setting $\kappa = \max_{\Lambda\in \mathcal{\check M}} \|\Lambda\|$ it follows that 
\begin{align}\label{eq-periodic-3}
|x_\e(ht_\e+\tau)|&\geq |x_\e(ht_\e)|e^{-\kappa \tau}-|z_\e(ht_\e)|O(\e)\nonumber\\
&\geq  |x_\e(ht_\e)|e^{-\kappa \tau}-\frac{2}{C}|x_\e(ht_\e)|O(\e)\nonumber\\
&\geq \frac{1}{2}|x_\e(ht_\e)|e^{-\kappa \tau}
\end{align}
for every $\tau\in [0,t_\e]$. Take $\e$ small enough in such a way that $1<\underline\alpha\leq |\alpha_\e|\leq \bar\alpha$ for some positive constants $\underline\alpha,\bar\alpha$, and $ t_\e\leq \bar t$, where $\bar t= 2 \sum_{h=1}^\ell t_h$. By~\eqref{eq-periodic-2} and \eqref{eq-periodic-3} we thus get for every $t\geq 0$
\begin{align*}
|z_\e(t)|\geq |x_\e(t)|&\geq \frac{C}{4}e^{-\kappa t_\e}|\alpha_\e|^{\floor{\frac{t}{t_\e}}}|z_\e(0)|
\geq  \frac{Ce^{-\kappa t_\e}}{4|\alpha_\e|}e^{\frac{\log|\alpha_\e|}{t_\e}t}|z_\e(0)|\geq \hat Ce^{\hat\lambda t}|z_\e(0)|,
\end{align*}
where $\hat C=\frac{Ce^{-\kappa\bar t }}{4\bar\alpha}$ and 
$\hat\lambda = \frac{\log\underline\alpha}{\bar t}$, which shows that $\Sigma$ is $\e$-EU.
\end{proof}

\begin{example}[$\check \Sigma$ gives sharper bounds than $\bar \Sigma$]
Consider system $\Sigma$ with 
\[
\M=\left\{M_1=\begin{pmatrix}
    -1& 1 \\
     0& -0.1
\end{pmatrix}, M_2=\begin{pmatrix}
    -3& 0 \\
     {2}& -0.1
\end{pmatrix}\right\}.
\]
The stability of singularly perturbed planar switching systems is completely characterized in~\cite[Theorem 2]{Hachemi2011} through some necessary and sufficient conditions. Based on this characterization (cf., in particular, Item (SP5) in \cite[Theorem 2]{Hachemi2011})  the  condition 
\begin{align}\label{eq:SP5}
    \Gamma(M_1,M_2)&:=\frac{1}{2}\left(\mathrm{tr}(M_1)\mathrm{tr}(M_2)-\mathrm{tr}(M_1M_2)\right)
   <-\sqrt{\det(M_1)\det(M_2)}
\end{align}
implies that $\Sigma_\e$ is EU for all $\e>0$. Condition~\eqref{eq:SP5} is satisfied in the case of this example with $\Gamma(M_1,M_2)=-0.8$ and $\det(M_1M_2)=0.03$. 
Look now at systems $\bar\Sigma$ and $\check\Sigma$.
We have $\bar\M=\{-1, -3\}$ and then the associated system $\bar\Sigma$ is ES. 
Concerning system $\check\Sigma$, let us consider the switching signal 
\[\sigma(t)=\alpha(t)M_1+(1-\alpha(t))M_2
\]
associated with the 2-periodic function 
\begin{equation*}
 \alpha(t)=
 \begin{cases}
    1 & t\in [0,1],  \\
     0 & t\in [1,2],  
 \end{cases}
\end{equation*}
and take $T=2$. For this choice of $\sigma$ and $T$ one can easily verify that \[\Lambda(T,\s)=-2+100(1-e^{-0.2})^{-1}(1-e^{-0.1})^2>0.\] Then $\check\Sigma$ is EU, as illustrated in Figure~\ref{fig1}.

\begin{figure}[!ht]
    \centering
    \includegraphics[scale=0.5]{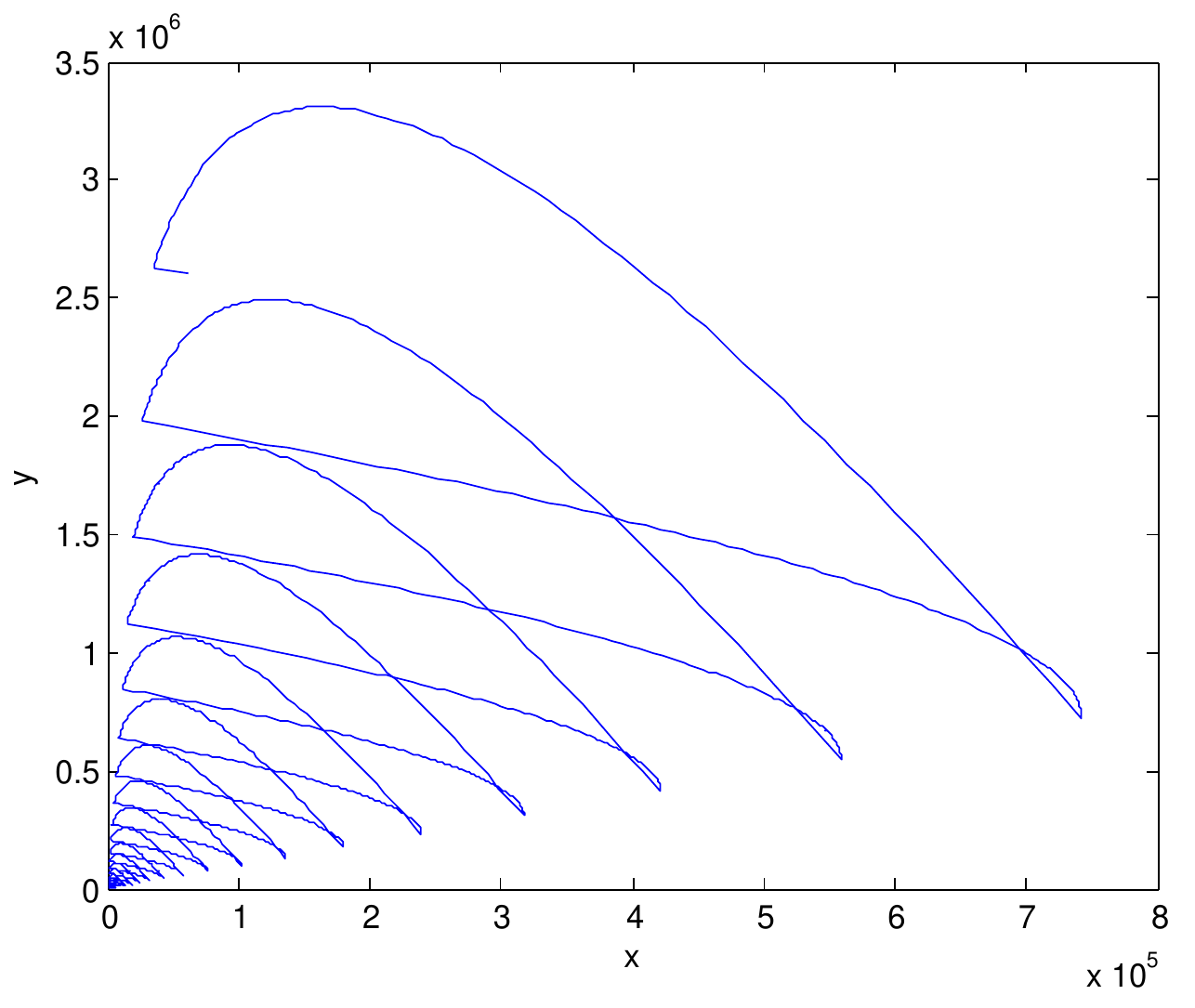}
    \caption{Fast and slow variables evolution of system $\Sigma_\e$ with signal $\sigma$ and $\e=0.1$, starting from $(x_0,y_0)=(1,1)$. }
    \label{fig1}
\end{figure}

\end{example}

\section{Comparison between the Lyapunov exponents of $\Sigma$ and $\hat \Sigma$}\label{sec:hat}

\subsection{Definition and structural properties of the differential inclusion $\hat\Sigma$}
The following lemma studies the $\omega$-limit set of the dynamics $\Sigma_x$ given by~\eqref{fast}.\\
\begin{lemma}\label{independance}
Let $x\in\R^n $. Then, for every $y_0\in\R^m$ and $\s\in\S_{\M}$, the $\omega$-limit set $\omega_{\sigma}^{x}$ of the trajectory of $\Sigma_x$ associated with $\sigma$ and starting at $y_0$ does not depend on the initial condition $y_0$. 
\end{lemma}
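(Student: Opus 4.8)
The plan is to exploit the linearity of the fast dynamics $\Sigma_x$ in the variable $y$. Fix $x\in\R^n$ and $\sigma=(\begin{smallmatrix}A&B\\C&D\end{smallmatrix})\in\S_\M$, and let $y(\cdot;y_0)$ denote the solution of $\dot y = D(t)y + C(t)x$ with $y(0)=y_0$. By the variation of constants formula, $y(t;y_0) = \Phi_D(t,0)y_0 + y(t;0)$, so the difference of any two solutions starting from different initial conditions $y_0,y_0'$ is $y(t;y_0)-y(t;y_0') = \Phi_D(t,0)(y_0-y_0')$, which by Assumption~\ref{UGES-fast} (the bound $\|\Phi_D(t,0)\|\le c e^{-\alpha t}$) tends to $0$ as $t\to+\infty$. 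In other words, all trajectories of $\Sigma_x$ corresponding to the \emph{same} signal $\sigma$ are asymptotic to one another.

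The key step is then to convert this asymptotic equivalence into equality of $\omega$-limit sets. I would argue as follows: let $p\in\omega^x_\sigma(y_0)$, so there is a sequence $t_k\to+\infty$ with $y(t_k;y_0)\to p$. Then $y(t_k;y_0') = y(t_k;y_0) - \Phi_D(t_k,0)(y_0-y_0') \to p - 0 = p$, so $p\in\omega^x_\sigma(y_0')$ as well. This shows $\omega^x_\sigma(y_0)\subset\omega^x_\sigma(y_0')$, and by symmetry (exchanging the roles of $y_0$ and $y_0'$) the reverse inclusion holds, giving $\omega^x_\sigma(y_0)=\omega^x_\sigma(y_0')$. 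Since $y_0,y_0'$ were arbitrary, the $\omega$-limit set is independent of the initial condition.

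One should be slightly careful about two points. First, one must know the solution $y(\cdot;y_0)$ is defined and bounded on $[0,+\infty)$ so that the $\omega$-limit set is nonempty (or at least that the statement is not vacuous); this again follows from the variation of constants formula together with the exponential bound on $\Phi_D$ and the boundedness of $\M$, which gives $|y(t;y_0)|\le c e^{-\alpha t}|y_0| + c'|x|$ for a constant $c'$ depending only on $\M$ and $\alpha$. Second, one should note that $\Phi_D$ here is the flow of $\dot y = D(t)y$ for the specific signal $D$ extracted from $\sigma$, and that the decay bound from Assumption~\ref{UGES-fast} is uniform over $\S_{\M_D}$, hence in particular applies to this $D$. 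I do not expect any real obstacle here: the proof is essentially the one-line observation that $\Phi_D(t,0)(y_0-y_0')\to 0$, and the only work is packaging it correctly. The main (very minor) subtlety is just making sure the $\omega$-limit sets are taken along the same signal $\sigma$, so that the two trajectories being compared solve the same nonautonomous ODE and their difference is governed purely by the homogeneous flow $\Phi_D$.
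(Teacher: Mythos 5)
Your proof is correct and follows essentially the same route as the paper: the difference of two solutions of $\Sigma_x$ with the same signal satisfies the homogeneous equation $\dot z = D(t)z$, hence decays to zero by Assumption~\ref{UGES-fast}, which immediately forces the $\omega$-limit sets to coincide. The extra care you take (boundedness of trajectories, same signal for both solutions) is sound and only makes explicit what the paper leaves implicit.
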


\begin{proof}
Let $y_0, y_1\in \R^{m}$ and let $y_0(\cdot),y_1(\cdot)$ be the trajectories of $\Sigma_x$ associated with $\s$ and starting from $y_0$ and $y_1$, respectively. Setting $z(\cdot)=y_0(\cdot)-y_1(\cdot)$, one has $\dot z(t)=D(t)z(t)$, and thanks to Assumption~\ref{UGES-fast}, there exist $c, \delta>0$ such that 
\begin{equation}\label{eq: invariance 1}
|y_0(t)-y_1(t)|\leq ce^{-\delta t}|y_0-y_1|, \quad \forall\, t\geq 0. 
\end{equation}
By definition of an $\omega$-limit set, one deduces at once that $\omega_\sigma^x$ does not depend on $y_0$. 
 \end{proof}

Let us introduce the set valued-map $K:\R^n\leadsto\R^m$ defined by $K(x)=\overline{\bigcup_{\s}\omega_{\s}^{x}}$. We have the following proposition.\\

\begin{proposition}\label{forward invariance}
For each $x\in\R^n $ the set $K(x)$ is compact and forward invariant for the dynamics of $\Sigma_x$, and there exist $c, \delta>0$ such that 
\begin{equation}\label{eq:invariance}
\dist(y_0(t),K(x))\leq ce^{-\delta t}\dist(y_0,K(x)), \quad \forall t\ge 0,\;y_0\in \R^m,
\end{equation} 
for all $\s\in \S_{\M}$, where $y_0(\cdot)$ is the trajectory of~$\Sigma_x$ associated with $\s$ and starting from $y_0$.
\end{proposition}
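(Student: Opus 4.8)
The plan is to establish the three claims — compactness of $K(x)$, forward invariance, and the exponential attraction estimate~\eqref{eq:invariance} — in an order that lets the hardest part (the estimate) rest on the others. First I would fix $x\in\R^n$ and record the uniform exponential bound on the fast flow: by Assumption~\ref{UGES-fast} there are $c\ge 1$, $\delta>0$ with $\|\Phi_D(t,s)\|\le ce^{-\delta(t-s)}$ for all $t\ge s$ and all $D\in\S_{\M_D}$. This immediately gives, via the variation of constants formula applied to $\Sigma_x$, that \emph{every} trajectory $y_0(\cdot)$ of $\Sigma_x$ starting from $y(0)=0$ stays in the ball $B_R(0)$ with $R=\tfrac{c\,\kappa_C}{\delta}$, where $\kappa_C=\sup\{|C||x|: (\begin{smallmatrix}A&B\\C&D\end{smallmatrix})\in\M\}$; hence $\omega_\sigma^x\subset B_R(0)$ for every $\sigma$, so $K(x)\subset B_R(0)$ is bounded, and closed by construction, hence compact.

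Next, forward invariance. Take $\bar y\in K(x)$; by definition of $K$ and Lemma~\ref{independance} (which frees us from the choice of initial condition) I would approximate $\bar y$ by a point $y_*\in\omega_{\sigma_*}^x$ for some signal $\sigma_*$, and then pick a sequence of times $s_j\to\infty$ along which the corresponding trajectory $\tilde y(\cdot)$ of $\Sigma_{\sigma_*,x}$ satisfies $\tilde y(s_j)\to y_*$. Given $\sigma\in\S_\M$ and $t>0$, I want to show the point obtained by flowing $\bar y$ for time $t$ along $\sigma$ lies in $K(x)$. The idea is to build a new signal that agrees with $\sigma_*$ on a long initial window $[0,s_j]$ (so that its trajectory from $0$ is close to $y_*\approx\bar y$ at time $s_j$) and then switches to a time-shifted copy of $\sigma$ on $[s_j,s_j+t]$; by continuous dependence on the initial datum (estimate~\eqref{eq: invariance 1} from the previous lemma) the value of this trajectory at $s_j+t$ converges, as $j\to\infty$, to $\Phi_D^{\sigma}(t,0)\bar y+\int_0^t\Phi_D^\sigma(t,r)C(r)x\,dr$, i.e. the flow of $\Sigma_x$ applied to $\bar y$. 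Repeating the window-extension trick infinitely often produces a single signal whose $\omega$-limit set contains this point, so it belongs to $K(x)$; a closure/limit argument handles the original (non-approximated) $\bar y$. This is a fairly standard "chaining of concatenated signals" argument.

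For the attraction estimate~\eqref{eq:invariance}, I would argue as follows. Fix $\sigma$, $y_0$, and let $d_0=\dist(y_0,K(x))$; pick $w\in K(x)$ realizing the distance. Let $y_0(\cdot)$ be the trajectory from $y_0$ and $w(\cdot)$ the trajectory from $w$, both for the \emph{same} signal $\sigma$. Their difference solves $\dot z=D(t)z$, so $|y_0(t)-w(t)|\le ce^{-\delta t}|y_0-w|=ce^{-\delta t}d_0$. By forward invariance $w(t)\in K(x)$, hence $\dist(y_0(t),K(x))\le |y_0(t)-w(t)|\le ce^{-\delta t}d_0$, which is exactly~\eqref{eq:invariance} with the constants $c,\delta$ from Assumption~\ref{UGES-fast}. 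The main obstacle is really the forward-invariance step: one must be careful that the approximating concatenated signals are admissible elements of $\S_\M$ (measurability is not an issue, but one should only concatenate finitely or use piecewise-constant approximations, which is legitimate by Remark~\ref{rem:pseudo-fenichel}) and that the $\omega$-limit of the limiting signal genuinely captures the target point — a diagonal extraction over the windows $[0,s_j]$ does the job, but it needs to be written carefully. Once forward invariance is in hand, the exponential estimate is a two-line consequence of the linearity of the $y$-dynamics in the homogeneous part.
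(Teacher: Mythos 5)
Your proposal is correct and follows essentially the same route as the paper: compactness via the variation-of-constants bound on trajectories starting from $0$, forward invariance by concatenating long windows of the approximating signals with the given signal so that the flowed point lies in the $\omega$-limit set of the concatenated signal (using Lemma~\ref{independance} to restart the approach to the $\omega$-limit set after each window), and the exponential estimate as a two-line consequence of invariance plus the contraction of the homogeneous dynamics $\dot z=D(t)z$. The only (harmless) difference is organizational: you first treat a point of a single $\omega_{\sigma_*}^{x}$ and pass to a general $\bar y\in K(x)$ at the end by closedness of $K(x)$ and continuity of the affine flow, whereas the paper interleaves the approximating signals $\sigma_k$ directly into one concatenated signal.
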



\begin{proof}
Let $c$ and $\delta$ be as in \eqref{eq: invariance 1}.  
The set  $K(x)$ is closed by definition and its boundedness follows from the fact that 
for every $\sigma\in \S_\M$ the corresponding solution $y(\cdot)$ of $\Sigma_x$ with initial condition $y(0)=0$ satisfies 
\[|y(t)|=\left|\int_0^t \Phi_D(t,s)C(s)x\;ds\right|
\le \frac{c|x|\max_{M\in\M}\|M\|}\delta,\]
for $t\geq0$. Hence $K(x)$ is compact.

Let now $\bar y\in K(x)$, i.e., $\bar y=\lim_{k\to +\infty}y^k$, where $y^k\in \omega_{\s_k}^{x}$, $\s_k\in \S_\M$. For $\tilde t> 0$ and $\tilde\s$, 
denote by $\Phi_{x,\tilde\s}$ the flow of $\Sigma_x$ associated with the signal $\tilde\s$ and 
let us prove that $\tilde y:=\Phi_{x,\tilde\s}(\tilde t,0)\bar y$ is in $K(x)$. For $k\geq 1$, let $r_k=|\tilde y-\Phi_{x,\tilde\s}(\tilde t, 0)y^k)|$ and notice that $\lim_{k\to +\infty} r_k=0$. Moreover, for $k\geq 1$, there exists $\rho_k>0$ such that 
$\Phi_{x,\tilde\s}(\tilde t,0)B_{\rho_k}(y^k)\subset 
B_{2 r_k}(\tilde y)$.
We next define recursively the sequence 
$(z_{k})_{k\geq 0}$ with $z_0\in \R^m$ by setting
\begin{equation*}
    z_{k+1}=
    \Phi_{x,\tilde\s}(\tilde t,0)\Phi_{x,\s_{k+1}}(t_k,0)z_k,\qquad k\ge 0,
\end{equation*}
where the sequence of times $t_k$ 
is chosen so that $\Phi_{x,\s_{k+1}}(t_k,0)z_k
$ is in $B_{\rho_{k+1}}(y^{k+1})$. This is possible since $y^{k+1}\in  \omega_{\s_{k+1}}^{x}$ for $k\geq 0$. 
By construction, $z_k$ is in $B_{2 r_{k}}(\tilde y)$ for every $k\ge 1$. Moreover, $z_k=
\Phi_{x,\bar \s}(\tau_k,0)z_0$
where $\tau_k\to \infty$ and $\bar \sigma$ is constructed by repeatedly concatenating $\sigma_k|_{[0,t_k]}$ and $\tilde\sigma|_{[0,\tilde t]}$. 
Then $\tilde y$ is in $\omega^x_{\bar\s}\subset K(x)$.

Finally, using~\eqref{eq: invariance 1} with $y_1\in K(x)$ and the forward invariance of $K(x)$ for the dynamics of $\Sigma_x$, we have 
\begin{equation*}
\dist(y_0(t),K(x))\leq |y_0(t)-y_1(t)|\leq ce^{-\delta t}|y_0-y_1|, \quad \forall\, t\geq 0,
\end{equation*}
and one gets \eqref{eq:invariance} by arbitrariness of $y_1\in K(x)$.
 \end{proof}
 
\begin{remark}
The contents of the above proposition are essentially
contained in the preprint \cite{DellaRossa2022} (Theorem~1 and Proposition~2), where the authors study general switching affine  systems and the role of $K(x)$ is played by the set $\mathcal{K}_\infty$.
\end{remark}

\begin{lemma}\label{lem:KLiphom}
The set-valued map $K$ is globally Lipschitz continuous 
for the Hausdorff distance and homogeneous of degree one. 
\end{lemma}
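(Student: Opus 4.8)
The plan is to establish the two properties separately, the homogeneity being immediate and the Lipschitz continuity requiring the bulk of the work. For homogeneity: fix $x\in\R^n$ and $\lambda>0$. Given $\s\in\S_\M$, the trajectory $y(\cdot)$ of $\Sigma_x$ with $y(0)=0$ satisfies $y(t)=\int_0^t\Phi_D(t,s)C(s)x\,ds$, hence the trajectory of $\Sigma_{\lambda x}$ with the same $\s$ and same initial condition $y(0)=0$ is exactly $\lambda y(\cdot)$. Consequently $\omega_\s^{\lambda x}=\lambda\,\omega_\s^x$ (using Lemma~\ref{independance}, the $\omega$-limit set does not depend on the initial condition, so we may as well start from $0$), and taking the closure of the union over $\s$ gives $K(\lambda x)=\lambda K(x)$. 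Since $K(0)=\{0\}$ trivially, $K$ is homogeneous of degree one.

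For Lipschitz continuity, fix $x_1,x_2\in\R^n$ and a switching signal $\s=(\begin{smallmatrix}A&B\\C&D\end{smallmatrix})\in\S_\M$. Let $y_i(\cdot)$ be the solution of $\Sigma_{x_i}$ with $y_i(0)=0$; then $w(\cdot):=y_1(\cdot)-y_2(\cdot)$ solves $\dot w=D(t)w+C(t)(x_1-x_2)$, $w(0)=0$, so by Assumption~\ref{UGES-fast} (via \eqref{eq-flow-bound}) one has $|w(t)|\le \frac{c\,\max_{M\in\M}\|M\|}{\alpha}\,|x_1-x_2|=:L|x_1-x_2|$ for all $t\ge0$. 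Hence every point of $\omega_\s^{x_1}$ lies within distance $L|x_1-x_2|$ of $\omega_\s^{x_2}$: indeed if $\bar y\in\omega_\s^{x_1}$ there is a sequence $t_k\to\infty$ with $y_1(t_k)\to\bar y$, and, passing to a subsequence, $y_2(t_k)\to \tilde y$ for some $\tilde y\in\omega_\s^{x_2}$ with $|\bar y-\tilde y|\le L|x_1-x_2|$. Therefore $\sup_{\bar y\in\omega_\s^{x_1}}\dist(\bar y,\omega_\s^{x_2})\le L|x_1-x_2|$, and symmetrically, so $d_H(\omega_\s^{x_1},\omega_\s^{x_2})\le L|x_1-x_2|$.

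It remains to pass from this uniform (over $\s$) bound on individual $\omega$-limit sets to a bound on $K(x_i)=\overline{\bigcup_\s\omega_\s^{x_i}}$. Given $\bar y\in K(x_1)$, pick $\s$ and $y\in\omega_\s^{x_1}$ with $|\bar y-y|$ small; then $\dist(y,\omega_\s^{x_2})\le L|x_1-x_2|$, so $\dist(\bar y,K(x_2))\le |\bar y-y|+L|x_1-x_2|$, and letting the first term go to $0$ gives $\dist(\bar y, K(x_2))\le L|x_1-x_2|$. Taking the supremum over $\bar y\in K(x_1)$ and the symmetric estimate yields $d_H(K(x_1),K(x_2))\le L|x_1-x_2|$, which is the claimed global Lipschitz continuity with constant $L=\frac{c\,\max_{M\in\M}\|M\|}{\alpha}$.

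I expect the main (only mild) obstacle to be the compactness/subsequence argument that lets one find, for a given accumulation point of $y_1(\cdot)$, a nearby accumulation point of $y_2(\cdot)$: one must argue along a common sequence of times $t_k\to\infty$ and use that both trajectories are bounded (Proposition~\ref{forward invariance}) so that a common subsequence extracting limits in both exists, the limit of $y_2(t_k)$ being automatically in $\omega_\s^{x_2}$. Everything else is a routine Grönwall-type estimate and manipulation of Hausdorff distances, and the homogeneity is a one-line scaling remark.
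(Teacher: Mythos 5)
Your proposal is correct and follows essentially the same route as the paper: the same scaling argument for homogeneity, the same variation-of-constants/exponential-decay estimate giving $|y_1(t)-y_2(t)|\le L|x_1-x_2|$ uniformly in $t$ and $\s$, and the same passage from individual $\omega$-limit sets to $K$ via a density argument (your explicit subsequence extraction just spells out what the paper leaves implicit). The only cosmetic difference is that you restrict to $\lambda>0$ while the paper's scaling works for any nonzero $\lambda$; this is immaterial for how the lemma is used.
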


\begin{proof}
Notice that, given $x\in \R^n$ and a nonzero $\lambda\in \R$, $y(\cdot)$ is a trajectory of $\Sigma_x$ if and only if $\lambda y(\cdot)$ is a trajectory of $\Sigma_{\lambda x}$. We deduce that $z\in \omega_{\s}^x$ if and only if $\lambda z\in \omega_{\s}^{\lambda x}$ and hence that $K(\lambda x) = \lambda K(x)$.


As for  the Lipschitz continuity of $K$, let $x_1, x_2\in \R^n$. It is easy to 
{deduce from the variation of constant formula} that there exists 
$L_K>0$
 independent of $x_1, x_2$ such that 
\begin{equation*}
|y_1(t)-y_2(t)|\leq L_K|x_1-x_2|, \qquad \forall\, t\geq 0,\forall\, \s\in\S_{\M},
\end{equation*}
where $y_1(\cdot)$ and $y_2(\cdot)$ are the trajectories of $\Sigma_{x_1}$ and $\Sigma_{x_2}$, respectively, associated with $\s$ and starting from the same initial condition $y_0$. By consequence, we have 
 \begin{equation*}
\max\{d(y_1, \omega_{\s}^{x_2}), d(y_2, \omega_{\s}^{x_1})\}\leq L_K|x_1-x_2|, 
\end{equation*} 
for $y_1\in\,\omega_{\s}^{x_1}$ and  $y_2\in\,\omega_{\s}^{x_2}$, where $\omega_{\s}^{x_1}$ and $\omega_{\s}^{x_2}$ are the $\omega$-limit sets of $\Sigma_{x_1}$ and $\Sigma_{x_2}$, respectively, which, thanks to Lemma~\ref{independance}, do not depend on the initial condition $y_0$. From the previous inequality  we obtain that, 
 \begin{equation*}
\max\{d(y_1, \cup_{\s}\omega_{\s}^{x_2}), d(y_2, \cup_{\s}\omega_{\s}^{x_1})\}\leq L_K|x_1-x_2|,
\end{equation*} 
for $y_1\in\,\cup_{\s}\omega_{\s}^{x_1}$ and  $y_2\in\,\cup_{\s}\omega_{\s}^{x_2}$.
By a standard density argument, 
 \begin{equation*}\label{eq2:lipschitz}
\max\{d(y_1, K(x_2)
), d(y_2,K(x_1) 
)\}
\leq L_K|x_1-x_2|,
\end{equation*} 
for $y_1\in K(x_1)$ and $y_2\in K(x_2)$, 
from which we obtain, by the arbitrariness of $y_1$ and $y_2$, that
\begin{equation*}
d_{H}(K(x_1), K(x_2))\leq L_K|x_1-x_2|,
\end{equation*}
where 
we recall that
$d_{H}$ denotes the Hausdorff distance in $\R^m$. 
\end{proof}

\subsection{
{
Asymptotic estimates by converse Lyapunov arguments
}}
The argument provided below bears similarities with proofs given in \cite{WATBLED2005362}, where more general dynamics are considered.

We consider the 
$\mu$-shifted differential inclusion 
\[\hat\Sigma_{\mu}: \quad \dot{\hat x}\in 
\{A_\mu\hat x+B y\mid M=\left(\begin{smallmatrix}A & B\\C & D\end{smallmatrix}\right)
\in \M,\; y\in K(\hat x)\},
\]
where 
$A_{\mu}=A+\mu I_{n}$ with $\mu\in\R$.
By homogeneity of $K(\cdot)$ it follows that for every solution $\hat x(\cdot)$ of $\hat\Sigma$ the trajectory $t\mapsto \hat{x}_\mu(t) = e^{\mu t}\hat{x}(t)$ is a solution of $\hat\Sigma_{\mu}$. As a consequence $\lambda(\hat\Sigma_{\mu}) = \lambda(\hat\Sigma)+\mu$. 
Hence, recalling that $\lambda(\Sigma_{\e,\mu}) = \lambda(\Sigma_\e)+\mu$,
in order to prove the right-hand side of inequality~\eqref{eigenvalues} it is enough to show that, under Assumption~\ref{UGES-fast}, $\lambda(\hat\Sigma_{\mu})<0$ implies $\lambda(\Sigma_{\e,\mu})<0$ for $\e$ small enough.
In order to prove the latter statement we will construct a common Lyapunov function for the systems $\Sigma_{\e,\mu}$ as the sum of Lyapunov functions for $\hat\Sigma_{\mu}$ and for $\Sigma_x$ defined next.
Assume that $\lambda({\hat\Sigma_\mu})<0$. Then, by Lemma~\ref{lem:ES-DI}, $|\hat x(t)|\leq ce^{-\gamma t}|\hat x(0)|$ for some $c\geq 1$ and $\gamma>0$, for every trajectory $\hat x(\cdot)$ of $\hat\Sigma_{\mu}$.
%
Define
\begin{equation}\label{eq:V1}
    V_1(x) = \sup_{\hat x(\cdot), t\in [0,\hat t\,]}e^{\gamma t}|\hat x(t)|^2,
\end{equation}
where the supremum is computed among all trajectories $\hat x(\cdot)$  of $\hat\Sigma_{\mu}$ starting from $x\in \R^n$, and $\hat t=\frac{\log(c)}{2\gamma}$.
%

Furthermore, let $\delta\in (0,|\lambda(\Sigma_D)|)$ and $\bar c\ge 1$ be such that $|y(t)|\leq \bar c e^{-\delta t}|y(0)|$ for every trajectory $y(\cdot)$ of $\Sigma_D$. By Proposition~\ref{forward invariance} one has 
\[\dist(y_x(t),K(x))\leq \bar c e^{-\delta t}\dist(y_x(0),K(x))\] for every trajectory $y_x$ of $\Sigma_x$, for every $x\in\R^n$ and $t>0$. Define
\begin{equation}\label{eq:V2}
    V_2(x,y) = \sup_{y_x(\cdot), t\in [0,\bar t\,]}e^{\delta t}\dist(y_x(t),K(x))^2,
\end{equation}
where the supremum is computed among all trajectories $y_x(\cdot)$ of $\Sigma_x$ starting from $y\in \R^m$  and $\bar t = \frac{\log(\bar c)}{2\delta}$.

In the next lemma, we summarize the main properties of $V_1$ and $V_2$.
\begin{lemma}\label{lem:V1V2}
The positive definite functions $V_1$ and $V_2$ introduced 
in \eqref{eq:V1} and \eqref{eq:V2} are homogeneous of degree two and locally Lipschitz continuous. Moreover, $V_1$ and $V_2$  are nonincreasing along every trajectory of $\hat\Sigma_\mu$ and $\Sigma_x$ respectively and satisfy the following estimates: 
\begin{align}
V_1(\hat x(t))&\leq e^{-\gamma t} V_1(\hat x(0)),\label{eq:LyapV1}\\
V_2(x,\bar{y}_x( t))&\leq e^{-\delta t} V_2(x,\bar{y}_x(0)),\label{eq:LyapV2}
\end{align}
where $t\geq 0$, $x\in \R^n$, $\hat x(\cdot)$ is an arbitrary trajectory of $\hat\Sigma_\mu$, and $\bar{y}_x(\cdot)$ is an arbitrary trajectory of $\Sigma_x$.
\end{lemma}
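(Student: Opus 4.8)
The statement asserts five distinct properties of $V_1$ and $V_2$: homogeneity of degree two, local Lipschitz continuity, positive definiteness, monotonicity along trajectories, and the exponential decay estimates \eqref{eq:LyapV1}--\eqref{eq:LyapV2}. I would prove each in turn, treating $V_1$ in full detail and then indicating that $V_2$ is entirely analogous (with $\mathrm{dist}(\cdot,K(x))$ playing the role of $|\cdot|$ and using that $x\mapsto K(x)$ is homogeneous of degree one, hence $\mathrm{dist}(\lambda y,K(\lambda x))=|\lambda|\,\mathrm{dist}(y,K(x))$ for $\lambda\ne 0$). For homogeneity of $V_1$: since $\hat\Sigma_\mu$ has a set-valued right-hand side that is homogeneous of degree one, $\hat x(\cdot)$ is a trajectory from $x$ iff $\lambda\hat x(\cdot)$ is a trajectory from $\lambda x$, so the supremum defining $V_1(\lambda x)$ is $\lambda^2$ times the one defining $V_1(x)$. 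Positive definiteness is immediate: taking $t=0$ in the supremum gives $V_1(x)\ge|x|^2$, and the bound $|\hat x(t)|\le ce^{-\gamma t}|\hat x(0)|$ together with $e^{\gamma t}\le e^{\gamma\hat t}=c$ on $[0,\hat t\,]$ gives $V_1(x)\le c\cdot c^2|x|^2 = c^3|x|^2$, so $|x|^2\le V_1(x)\le c^3|x|^2$.

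For the decay estimate \eqref{eq:LyapV1}, the key is a semigroup/concatenation argument together with the specific choice $\hat t=\tfrac{\log c}{2\gamma}$. Fix a trajectory $\hat x(\cdot)$ of $\hat\Sigma_\mu$ and $t\ge 0$. Any trajectory starting from $\hat x(t)$ concatenated after $\hat x|_{[0,t]}$ is again a trajectory of $\hat\Sigma_\mu$; using the global bound $|\hat x(s)|\le ce^{-\gamma s}|\hat x(0)|$ one estimates, for $s\in[0,\hat t\,]$, $e^{\gamma s}|\hat x(t+s)|^2\le e^{\gamma s}c^2 e^{-2\gamma(t+s)}|\hat x(0)|^2 = c^2 e^{-2\gamma t}e^{-\gamma s}|\hat x(0)|^2$. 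Hmm — this is not quite how to get the factor $e^{-\gamma t}$ in front of $V_1(\hat x(0))$; the cleaner route is: $V_1(\hat x(t))=\sup_{s\in[0,\hat t\,]}e^{\gamma s}|\hat x(t+s)|^2$ where the sup ranges over continuations. Each continuation $s\mapsto \hat x(t+s)$ is a trajectory from $\hat x(t)$, and prepending $\hat x|_{[0,t]}$ yields a trajectory from $\hat x(0)$ on which, at time $t+s\ge \hat t$ (for $t$ large) one would want to relate back to $V_1(\hat x(0))$. The honest argument is to use the bound $|\hat x(t+s)|^2\le c^2 e^{-2\gamma(t+s)}|\hat x(0)|^2$ directly: then $e^{\gamma s}|\hat x(t+s)|^2\le c^2 e^{-2\gamma t}e^{-\gamma s}|\hat x(0)|^2\le c^2 e^{-2\gamma t}|\hat x(0)|^2$. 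Since $V_1(\hat x(0))\ge |\hat x(0)|^2$ this gives $V_1(\hat x(t))\le c^2 e^{-2\gamma t}V_1(\hat x(0))$, and for $t\ge \hat t = \tfrac{\log c}{2\gamma}$ one has $c^2 e^{-2\gamma t}\le c^2 e^{-\gamma t}e^{-\gamma\hat t} = c^{3/2}e^{-\gamma t}$... so the exponent $\gamma$ is recovered but with a constant. To get \eqref{eq:LyapV1} cleanly with constant $1$ one splits: for $t\le\hat t$ use the trajectory-prolongation identity $V_1(\hat x(t))\le e^{-\gamma t}V_1(\hat x(0))$ by noting $e^{\gamma s}|\hat x(t+s)|^2 = e^{-\gamma t}e^{\gamma(t+s)}|\hat x(t+s)|^2$ and $t+s\le 2\hat t$ needs the sup over $[0,2\hat t\,]$; for general $t$ iterate over steps of length $\hat t$. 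I will present the iteration argument carefully.

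Monotonicity along trajectories is then a corollary of \eqref{eq:LyapV1}--\eqref{eq:LyapV2}. Finally, local Lipschitz continuity: by homogeneity of degree two it suffices to prove Lipschitz continuity on the unit sphere, equivalently a bound $|V_1(x)-V_1(x')|\le L|x-x'|$ for $|x|,|x'|\le 1$; combined with the quadratic bound $V_1(x)\le c^3|x|^2$ and homogeneity this upgrades to local Lipschitzness everywhere. For the sphere estimate one uses continuous dependence of trajectories of $\hat\Sigma_\mu$ on the initial condition on the compact time interval $[0,\hat t\,]$: since $x\mapsto\hat{\mathcal M}(x)$ is compact-valued, Lipschitz in $x$ for the Hausdorff distance (Lemma~\ref{lem:KLiphom}), and has linear growth, a Gr\"onwall-type estimate (standard for differential inclusions with one-sided Lipschitz or Lipschitz right-hand sides, see Aubin--Cellina) gives that for each trajectory from $x$ there is a trajectory from $x'$ staying within $e^{Lt}|x-x'|$; this controls the difference of the suprema.

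The main obstacle I anticipate is the last point: establishing the Lipschitz estimate for trajectories of the \emph{differential inclusion} $\hat\Sigma_\mu$ with respect to the initial datum, uniformly on $[0,\hat t\,]$. This requires a Filippov-type or Gr\"onwall-type comparison lemma for set-valued dynamics whose right-hand side $x\mapsto A_\mu x + B K(x)$ is Lipschitz in the Hausdorff metric (using Lemma~\ref{lem:KLiphom}); one must be careful that a trajectory of one system need not be a trajectory of the other, so the comparison is between a given solution and a \emph{nearby} solution obtained via the Filippov selection theorem. Once this lemma is in place, everything else is bookkeeping with the explicit choices of $\hat t$ and $\bar t$.
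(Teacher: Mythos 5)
Your overall strategy coincides with the paper's: homogeneity and positive definiteness read off directly from the definitions, the decay estimates come from concatenating trajectories and shifting the time window inside the supremum, and local Lipschitz continuity follows from a uniform Lipschitz dependence, on the initial data, of the family of functions whose supremum defines $V_1$ and $V_2$. Your remark that the Lipschitz estimate for $V_1$ genuinely requires a Filippov-type comparison theorem for the differential inclusion $\hat\Sigma_\mu$ (whereas for $V_2$ one can argue explicitly via the variation of constants formula and the Lipschitz continuity of $K$) is well taken: the paper only writes out the $V_2$ case and declares the $V_1$ case ``completely analogous,'' so on this point your treatment is, if anything, more explicit than the original.

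There is, however, one genuine gap, and it sits exactly at the step you flag but do not resolve. After writing $V_1(\hat x(h)) = e^{-\gamma h}\sup e^{\gamma(h+s)}|\hat x(h+s)|^2$ with $h+s$ ranging over $[h,\hat t+h]$, you correctly observe that the supremum now runs over an interval longer than $[0,\hat t\,]$, but you defer the resolution to an unspecified ``iteration over steps of length $\hat t$.'' Iteration does not help: every step of the iteration, however short, faces the same overshoot of the window. The missing ingredient --- and the only place where the specific value of $\hat t$ (resp.\ $\bar t$) is used --- is a domination fact: for $u>\hat t$ one has $e^{\gamma u}|\tilde x(u)|^2\leq c^2 e^{-\gamma u}|\tilde x(0)|^2<|\tilde x(0)|^2$, i.e., the contribution of times beyond $\hat t$ is strictly smaller than the contribution at $u=0$, so enlarging the supremum interval from $[0,\hat t\,]$ to $[0,\hat t+h]$ does not change the supremum. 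With this observation one obtains $V_1(\hat x(h))\leq e^{-\gamma h}V_1(\hat x(0))$ for every $h>0$ in a single step, with no iteration; this is precisely how the paper argues (written out for $V_2$). Note in passing that for $c^2e^{-\gamma u}<1$ to hold for all $u>\hat t$ one needs $\hat t\geq \frac{2\log c}{\gamma}$ rather than the $\frac{\log c}{2\gamma}$ appearing in \eqref{eq:V1}; this looks like a slip in the paper and is immaterial to the statement, but your write-up must fix the constants so that the choice of $\hat t$ is consistent with the domination step, since that is its only role in the proof.
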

\begin{proof}
It is clear that both $V_1$ and $V_2$ are homogeneous of degree two. We give the proof of the remaining properties for $V_2$, the corresponding arguments for $V_1$ being completely analogous. 

Let us next show that $V_2$ is locally Lipschitz continuous.
For every bounded set $\mathcal{B}\subset \R^m$ there exists a compact set of $\R^m$ containing every trajectory of $\Sigma_x$ starting from $\mathcal{B}$. Take two points $y_1,y_2$ in $\mathcal{B}$ and consider the trajectories $y^1_x,y^2_{x}$ of $\Sigma_x$ corresponding to the same switching law starting respectively from $y_1$ and $y_2$. Then  $|y^2_x(t)-y^1_x(t)|\leq \bar c e^{-\delta t}|y_2-y_1|$  for every $t\geq 0$. We deduce that the function $y\mapsto e^{\delta t}\dist(y_x(t),K(x))^2$, where $y_x$ is the trajectory starting from $y\in \mathcal{B}$ corresponding to a fixed switching law $\sigma\in \S_\M$, is Lipschitz continuous, and the Lipschitz constant does not depend on $t$ nor on the switching law. Since the supremum among a family of uniformly Lipschitz continuous functions is Lipschitz continuous, we deduce that $V_2$ is locally Lipschitz continuous in the variable $y$. 
Similarly, local Lipschitz continuity  of the map $x\mapsto e^{\delta t}\dist(y_x(t),K(x))^2$ for a fixed switching law follows from the fact that $y_x(\cdot)$ is affine with respect to the variable $x$ and $K(\cdot)$ is Lipschitz continuous. Furthermore, the corresponding Lipschitz constant (locally) does not depend on $t$ nor on the switching law. 

Consider now a trajectory $\bar{y}_x(\cdot)$ of $\Sigma_x$ and let us prove that $V_2(x,\cdot)$ is nonincreasing along it. For $h>0$ one has 
\begin{align*}
V_2(x,\bar{y}_x( h)) & =  \sup_{\substack{y_x(\cdot),\ y_x|_{[0, h]}=\bar y_x|_{[0, h]}\\ t\in [ h,\bar t+ h]}}e^{\delta (t- h)}\dist(y_x(t),K(x))^2\\
& \leq  \sup_{\substack{y_x(\cdot),\ y_x(0)=\bar{y}_x(0)\\t\in [ h,\bar t+ h]}}e^{\delta (t- h)}\dist(y_x(t),K(x))^2\\
& \leq  \sup_{\substack{y_x(\cdot),\ y_x(0)=\bar{y}_x(0)\\t\in [0,\bar t+ h]}}e^{\delta (t- h)}\dist(y_x(t),K(x))^2\\
& =  e^{-\delta h}\sup_{\substack{y_x(\cdot),\ y_x(0)=\bar{y}_x(0)\\t\in [0,\bar t]}}e^{\delta t}\dist(y_x(t),K(x))^2\\
& = e^{-\delta h} V_2(x,\bar{y}_x(0)),
\end{align*}
where  we have used the fact that 
\begin{align*}
e^{\delta t}\dist(y_x(t),K(x))^2
&\leq \bar c^2 e^{-\delta t}\dist(y_x(0),K(x))^2<\dist(y_x(0),K(x))^2
\end{align*} 
for every $t>\bar t$ along every trajectory of $\Sigma_x$.
%
\end{proof}

 Based on the above construction of $V_1$ and $V_2$, we  next show the existence of a common Lyapunov function allowing us to prove that  $(\Sigma_{\e,\mu})_{\e>0}$ is $\e$-ES. \\
\begin{proposition}
There exists $\chi>0$, $0 < \alpha_- < \alpha_+$, $\eta>0$, and $\e_*>0$ such that, setting $V = V_1+\chi V_2$, one has
\begin{align}
 \alpha_-|(x,y)|^2 \leq V(x,y) \leq \alpha_+|(x,y)|^2\label{Lyap-1},\qquad &\forall x\in \R^n,\;\forall y\in \R^m,\\
 V(x(t),y(t))\leq V(x(0),y(0)) e^{-\eta t},\qquad&\forall t\ge 0,\label{Lyap-2}
\end{align}
where~\eqref{Lyap-2} holds true for every solution $(x(\cdot),y(\cdot))$  of $\Sigma_{\e,\mu}$ for $\e<\e_*$. As a consequence $\limsup_{\e\to 0^+} \lambda(\Sigma_{\e}) \leq \lambda(\hat\Sigma)$ and if $\lambda(\hat\Sigma)<0$  then $\Sigma$ is $\e$-ES.
\end{proposition}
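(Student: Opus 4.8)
The plan is to show that $V=V_1+\chi V_2$ is, for a suitable $\chi>0$ and all sufficiently small $\e$, a strict Lyapunov function for $\Sigma_{\e,\mu}$, with $V_1$ acting as a ``slow'' and $V_2$ as a ``fast'' Lyapunov function; the weight $\chi$ and the threshold $\e_*$ will have to be fixed in a precise order. The bounds \eqref{Lyap-1} come almost for free: from \eqref{eq:V1} and the estimate $|\hat x(t)|\le c e^{-\gamma t}|\hat x(0)|$ one reads off $|x|^2\le V_1(x)\le c^2|x|^2$, and from \eqref{eq:V2} and Proposition~\ref{forward invariance} one gets $\dist(y,K(x))^2\le V_2(x,y)\le \bar c^{\,2}\dist(y,K(x))^2$. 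Since $K(\cdot)$ is homogeneous of degree one and bounded on the unit sphere (Lemma~\ref{lem:KLiphom}), $\sup_{z\in K(x)}|z|\le M|x|$ for some $M>0$, whence $\dist(y,K(x))\le|y|+M|x|$ and $|y|^2\le 2\dist(y,K(x))^2+2M^2|x|^2\le 2V_2(x,y)+2M^2V_1(x)$; combining these four inequalities, once $\chi>0$ is fixed one obtains $\alpha_-|(x,y)|^2\le V\le\alpha_+|(x,y)|^2$ with $0<\alpha_-<\alpha_+$ depending on $\chi$ (positive definiteness of $V$ uses in addition $K(0)=\{0\}$, which holds because $\Sigma_D$ is ES).

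For \eqref{Lyap-2}, fix a solution $(x(\cdot),y(\cdot))$ of $\Sigma_{\e,\mu}$; since $V_1,V_2$ are locally Lipschitz (Lemma~\ref{lem:V1V2}), the maps $t\mapsto V_1(x(t))$ and $t\mapsto V_2(x(t),y(t))$ are absolutely continuous and I estimate their derivatives through Clarke gradients. For $V_1$: writing $\hat y=P_{K(x)}(y)\in K(x)$ and $v=A_\mu x+B\hat y$, one has $\dot x=v+B(y-\hat y)$ with $v$ an admissible velocity of $\hat\Sigma_\mu$ at $x$; by \eqref{eq:LyapV1} and the standard characterization of Lyapunov functions for differential inclusions, $\langle\xi,v\rangle\le-\gamma V_1(x)$ for every $\xi\in\partial_C V_1(x)$, while the term $B(y-\hat y)$ contributes at most $\mathrm{Lip}_{V_1}\|B\|\,|y-\hat y|\le C_1|x|\,\dist(y,K(x))\le C_1|x|\sqrt{V_2}$ (using the homogeneity of $V_1$). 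Hence $\tfrac{d}{dt}V_1(x(t))\le-\gamma V_1+C_1|x|\sqrt{V_2}$. For $V_2$: with $x$ frozen, the $y$-equation of $\Sigma_{\e,\mu}$ is, in the time $\tau=t/\e$, the dynamics $\Sigma_x$ perturbed by the additive term $\e\mu y$; by \eqref{eq:LyapV2} the unperturbed flow decreases $V_2(x,\cdot)$ at rate $\delta$, while the $O(\e)$ perturbation and the drift of $x$ (for which $|\dot x|\le C(|x|+|y|)$) each contribute $O(1)\,|(x,y)|^2$ via the Lipschitz constants of $V_2$, so that $\tfrac{d}{dt}V_2(x(t),y(t))\le-\tfrac{\delta}{\e}V_2+C_2|(x,y)|^2$ for $\e$ small. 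Adding $\chi$ times the second inequality to the first, and inserting $|(x,y)|^2\le(1+2M^2)V_1+2V_2$, $|x|^2\le V_1$, together with the Young estimate $C_1|x|\sqrt{V_2}\le\tfrac{C_1^2}{2\beta}V_1+\tfrac{\beta}{2}V_2$ (for any $\beta>0$), gives, for a.e.\ $t$ and $\e$ small,
\[
\frac{d}{dt}V\le\Big(-\gamma+\chi c_1+\frac{C_1^2}{2\beta}\Big)V_1+\Big(-\chi\frac{\delta}{\e}+\chi c_2+\frac{\beta}{2}\Big)V_2,
\]
for constants $c_1,c_2>0$ independent of $\chi,\beta,\e$.

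Now choose, \emph{in this order}: $\chi>0$ small enough that $\chi c_1\le\gamma/4$; then $\beta>0$ large enough that $C_1^2/(2\beta)\le\gamma/4$; then $\e_*>0$ small enough that all the earlier ``$\e$ small'' requirements hold and $\chi\delta/\e\ge\chi c_2+\beta/2+\gamma/2$ for $\e<\e_*$. Both coefficients above are then $\le-\gamma/2$, so $\tfrac{d}{dt}V\le-\tfrac{\gamma}{2}(V_1+V_2)\le-\eta V$ with $\eta=\tfrac{\gamma}{2\max(1,\chi)}$, and a Gronwall argument yields \eqref{Lyap-2}. For the consequences, \eqref{Lyap-1}--\eqref{Lyap-2} give $|(x(t),y(t))|\le\sqrt{\alpha_+/\alpha_-}\,e^{-\eta t/2}|(x(0),y(0))|$ uniformly over $\S_\M$ for $\e<\e_*$, hence $\lambda(\Sigma_{\e,\mu})\le-\eta/2<0$, i.e.\ $\lambda(\Sigma_\e)=\lambda(\Sigma_{\e,\mu})-\mu\le-\mu-\eta/2$ for $\e<\e_*(\mu)$. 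Since this construction applies to every $\mu<-\lambda(\hat\Sigma)$ (for which $\lambda(\hat\Sigma_\mu)<0$ and Lemma~\ref{lem:ES-DI} supplies $\gamma>0$), taking $\limsup_{\e\to0^+}$ and then letting $\mu\uparrow-\lambda(\hat\Sigma)$ gives $\limsup_{\e\to0^+}\lambda(\Sigma_\e)\le\lambda(\hat\Sigma)$; and if $\lambda(\hat\Sigma)<0$ the choice $\mu=0$ gives $\|\Phi^\e_M(t,0)\|\le\sqrt{\alpha_+/\alpha_-}\,e^{-\eta t/2}$ for all $M\in\S_\M$, $t\ge0$, $\e<\e_*(0)$, which is precisely the $\e$-ES property of Definition~\ref{0-GES def}.

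The main obstacle is the $V_1$-dissipation step: one must transfer the decay of $V_1$ from the differential inclusion $\hat\Sigma_\mu$ (whose admissible velocities are $A_\mu x+By$ with $y\in K(x)$) to the genuine dynamics of $x$ in $\Sigma_{\e,\mu}$ (where $y$ is an independent variable), and control the mismatch by $\dist(y,K(x))\lesssim\sqrt{V_2}$; this rests on the Lipschitz continuity of $K$ (Lemma~\ref{lem:KLiphom}) and of $V_1$, and on a careful treatment of the non-smoothness of $V_1,V_2$ (either via Clarke calculus and the monotonicity characterization, or, in the spirit of Lemma~\ref{lem:V1V2}, via finite-difference comparison with true trajectories of $\hat\Sigma_\mu$ and of $\Sigma_x$). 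A secondary, purely bookkeeping, difficulty is that the ``bad'' $V_1$-terms and the compensating fast term $-\chi\delta/\e$ both carry the factor $\chi$, which is exactly what forces the order $\chi\to\beta\to\e_*$ in which the constants must be chosen.
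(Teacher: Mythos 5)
Your proposal follows essentially the same route as the paper: the same composite Lyapunov function $V=V_1+\chi V_2$, the same dissipation structure (slow decay of $V_1$ with a cross term controlled by $\dist(y,K(x))\le\sqrt{V_2(x,y)}$, fast decay of $V_2$ at rate $\delta/\e$ with an $O(1)|(x,y)|^2$ remainder), and an equivalent bookkeeping in the choice of $\chi$, $\beta$, $\e_*$. The one step to tighten is the $V_1$-dissipation: passing from the trajectory decay \eqref{eq:LyapV1} to $\langle\xi,v\rangle\le-\gamma V_1(x)$ for \emph{every} Clarke subgradient $\xi$ is not a standard fact for a merely Lipschitz $V_1$, so you should carry out the finite-difference alternative you yourself mention, namely comparing $x(\cdot)$ on $[t,t+\e h]$ with a genuine trajectory of $\hat\Sigma_\mu$ via the Filippov approximation theorem (Theorem~10.4.1 of \cite{Aubin-Frankowska}), which is exactly what the paper does.
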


\begin{proof}
The right inequality in~\eqref{Lyap-1} follows from the bounds 
\begin{align*}
V_1(x)\leq c^2 |x|^2
\end{align*}
and
\begin{align*}
V_2(x,y)\leq \bar c^2\, \dist(y,K(x))^2&\leq \bar c^2\, (|y|+\dist(0,K(x)))^2
\leq \bar c^2(|y|+L_K|x|)^2,
\end{align*}
where $L_K$ is the Lipschitz constant for $K(\cdot)$. Concerning the left inequality in~\eqref{Lyap-1}, note that $\dist(y,K(x))\geq |y|-L_K|x|$, where $L_K$ is the Lipschitz constant for $K(\cdot)$. Then, either $|y|> 2 L_K|x|$, in which case $\dist(y,K(x))> \frac12 |y|$ and 
\[V(x,y)\geq |x|^2+\chi \dist(y,K(x))^2>|x|^2+\frac{\chi}4|y|^2, \]
or $|y|\leq 2 L_K|x|$, in which case 
\[V(x,y)\geq |x|^2\geq \frac{1}{1+4L_K^2} |(x,y)|^2.\]
The desired inequality holds true with \[\alpha_-=\min\left\{\frac{1}{1+4L_K^2},\frac{\chi}4\right\}.\]
Consider now a trajectory $(x(\cdot),y(\cdot))$  of $\Sigma_{\e,\mu}$ corresponding to the switching law $\s$. In order to prove~\eqref{Lyap-2} we first estimate the difference $V(x(\e h),y(\e h))-V(x(0),y(0))$ for small $h$. 
Note that 
\begin{align*}
&|x(\e h)-x(0)|=\e|(x(0),y(0))|\, O(h),\\
&|y(\e h)-y(0)|=|(x(0),y(0))|\,O(h),
\end{align*}
where $O(h)$ denotes a function bounded in absolute value by $Ch$, 
where the constant $C$ does not depend on $x(0)$, $y(0)$, $h$, nor $\e$, for $\e,h$ in a small right-neighborhood of zero.  
%
%
%
%
Knowing that \[t\mapsto\dist(\dot x(t),A_{\mu}(t)x(t)+B(t)K(x(t)))\leq \|B\|\dist(y(t),K(x(t)))\] is integrable, we deduce from  
Theorem~10.4.1 in \cite
{Aubin-Frankowska} that
there exists a solution $\tilde x(\cdot)$ of $\hat\Sigma_{\mu}$ 
such that $\tilde x(0)=x(0)$ and 
\begin{align*}\label{eq:paolo}
|x(\e h)-\tilde x(\e h)|
&\leq \|B\|e^{(\|A_{\mu}\|+L_K\|B\|)\e h}\int_0^{\e h}{\dist(y(s),K(x(s)))ds}\\
& \leq C_1\e h \,\dist(y(0),K(x(0)))+\e|(x(0),y(0))| \,O(h^2),
\nonumber
\end{align*}
for some $C_1>0$, where we have used the fact that
\[\dist(p,P)\leq |p-q| + \dist(q,Q) + d_H(Q,P), \]
for $p,q\in \R^m$ and $P,Q\subset \R^m$.
Hence 
\begin{align*}
V_1(x(\e h)) - V_1(\tilde x( h))\leq& \e |(x(0),y(0))|\,\dist(y(0),K(x(0)))\,O( h)
+\e |(x(0),y(0))|^2\,O( h^2),
\end{align*}
as it follows from the fact that the Lipschitz constant of $V_1$ on a ball of radius $r$ is of order $r$. 
By using~\eqref{eq:LyapV1},
\begin{align*}
V_1(x(\e h)) - V_1(x(0))
&=V_1(x(\e h)) - V_1(\tilde x(\e h)) + V_1(\tilde x(\e h)) -  V_1(x(0))\\
&\leq V_1(x(0))(e^{-\gamma \e h}-1)+\e|(x(0),y(0))|^2\,O( h^2) +\e |(x(0),y(0))|\,\dist(y(0),K(x(0)))\,O( h).
\end{align*}
Moreover, 
\begin{align*}
V_2(x(\e h),y(\e h))-V_2(x(0),y(0)) 
&\leq  |V_2(x(\e h),y(\e h))- V_2(x(0),y(\e h))| 
+ |V_2(x(0),y(\e h))-V_2(x(0),\tilde y(\e h))|\\
&+ V_2(x(0),\tilde y(\e h)) -V_2(x(0),y(0)),
\end{align*}
where $\tilde y({\cdot}/{\e})$ is the solution of $\Sigma_{x(0)}$ starting at $y(0)$
and corresponding to the signal $\sigma({\cdot}/{\e})$. 
The first term is of order  $\e |(x(0),y(0))|^2 O(h)$, while, by~\eqref{eq:LyapV2}, \[V_2(x(0),\tilde y(\e h)) -V_2(x(0),y(0))\leq (e^{-\delta h}-1)V_2(x(0),y(0)).\] Furthermore 
\begin{align*}
\frac{d}{dt}(y(t)-\tilde y(t)) &= \frac1{\e}D(t)(y(t)-\tilde y(t)) 
+ \frac1{\e}C(t)(x(t)-x(0))+\mu y(t),
\end{align*}
from which one gets that \[|y(\e h)-\tilde y(\e h)| =\e |(x(0),y(0))|\,O( h),\] so that 
\[
|V_2(x(0),y(\e h))-V_2(x(0),\tilde y(\e h))|=\e|(x(0),y(0))|^2\,O( h).
\]
Summing up, 
\begin{align*}
&V_2(x(\e h),y(\e h))-V_2(x(0),y(0))
\leq (e^{-\delta h}-1)V_2(x(0),y(0)) + \e|(x(0),y(0))|^2\,O(h)
\end{align*}
and
\begin{align*}
V(x(\e h),y(\e h))  -V(x(0),y(0))  
&\leq  (e^{-\gamma \e h}-1)V_1(x(0)) + \chi(e^{-\delta h}-1)V_2(x(0),y(0))\\
&+ C_2 \e h|(x(0),y(0))|\,\dist(y(0),K(x(0)))  \\
&+ C_3 \chi \e h|(x(0),y(0))|^2+ C_4 \e h^2|(x(0),y(0))|^2,
\end{align*}
where  $C_2,C_3,C_4$ do not depend on $x(0)$, $y(0)$, $h$, nor $\e$, for $h,\e$ in a small right-neighborhood of zero.  
We have 
\begin{align*}
\limsup_{h\to 0^+}\frac{V(x(\e h),y(\e h))  -V(x(0),y(0))}{h}
&\leq -\gamma\e V_1(x(0)) -  \delta\chi V_2(x(0),y(0))
+ C_2 \e |(x(0),y(0))|\,\dist(y(0),K(x(0)))\\
& + C_3 \chi \e |(x(0),y(0))|^2.
\end{align*}
We write the right-hand side as the sum of the three terms 

\begin{align*}
W_1 &= -\frac{\gamma\e}2 (V_1(x(0)) + \chi V_2(x(0),y(0)))
=-\frac{\gamma\e}2 V(x(0),y(0)),\\
W_2 &= -\frac{\chi}4 (2\delta-\gamma\e)V_2(x(0),y(0)) - C_3  \chi \e |(x(0),y(0))|^2 -C_2\e |(x(0),y(0))|\,\dist(y(0),K(x(0))) \\
&\leq  -\frac{\chi}4 (2\delta-\gamma\e)\dist(y(0),K(x(0))^2 - C_3  \chi \e |(x(0),y(0))|^2 -C_2\e |(x(0),y(0))|\,\dist(y(0),K(x(0))), \\
W_3 &= -\frac{\gamma\e}2 V(x(0)) -\frac{\chi}4 (2\delta-\gamma\e)V_2(x(0),y(0)) + 2C_3  \chi \e |(x(0),y(0))|^2\\
&\leq -\frac{\gamma\e}2 |x(0)|^2 -\frac{\chi}4 (2\delta-\gamma\e)\dist(y(0),K(x(0))^2 + 2C_3  \chi \e |(x(0),y(0))|^2,
\end{align*}
where the inequalities in $W_2,W_3$ are obtained assuming $\e\leq \frac{2\delta}{\gamma}$.
For any given $\chi>0$, it is easy to see that $W_2\leq 0$ if $\e$ is small enough.
Since \[|y(0)|\leq L_K|x(0)|+\dist(y(0),K(x(0)))\] we get 
\[|(x(0),y(0))|^2 \leq (1+2L_K^2)|x(0)|^2 + 2\dist(y(0),K(x(0)))^2\] so that $W_3\leq 0$, provided that $\chi$ is chosen so that 
\[-\frac{\gamma}2 + 2C_3  \chi (1+2L_K^2) \leq 0\] 
and $\e$ is small enough.
By a time-shift we obtain, for $t\geq 0$,
\begin{align*}
    \limsup_{\tau\to 0^+}&\frac{V(x(t+\tau),y(t+\tau))  -V(x(t),y(t))}{\tau}
\leq -\frac{\gamma}2 V(x(t),y(t)).
\end{align*}
Since $V(x(\cdot),y(\cdot))$ is absolutely continuous we deduce that 
\[\frac{d}{dt}V(x(t),y(t)) \leq -\frac{\gamma}{2} V(x(t),y(t)), \quad \hbox{a.e. }t\geq 0,\] 
and 
\[V(x(t),y(t))\leq V(x(0),y(0)) e^{-\frac{\gamma}{2}t}, \quad \forall\, t\geq 0,\] concluding the proof.
\end{proof}

\bibliography{biblio}           

\end{document}